\documentclass[12pt]{amsart}
\usepackage[utf8]{inputenc}

\usepackage{mathrsfs} 
\usepackage{amssymb}
\usepackage{bm}
\usepackage{graphicx}
\usepackage[centertags]{amsmath}
\usepackage{amsfonts}
\usepackage{amsthm}
\linespread{1.18}
\usepackage{enumerate}
\usepackage{tocvsec2}
\usepackage{xcolor}
\usepackage[margin=1in]{geometry}

\newtheorem{theorem}{Theorem}[section]
\newtheorem*{theorem*}{Theorem}

\newtheorem{corollary}[theorem]{Corollary}
\newtheorem{lemma}[theorem]{Lemma}

\newtheorem{proposition}[theorem]{Proposition}

\newtheorem*{fact*}{Fact}

\newtheorem{claim}{Claim}
\theoremstyle{definition}

\newcommand{\ee}{\varepsilon}
\newcommand{\nn}{\mathbb{N}}
\newcommand{\rr}{\mathbb{R}}

\pagenumbering{gobble}

\begin{document}

\title[Subprojectivity]{Subprojectivity of projective tensor products \\ of Banach spaces of continuous functions}
\author{R.M. Causey}
\email{rmcausey1701@gmail.com}

\begin{abstract} Galego and Samuel showed that if $K,L$ are metrizable, compact, Hausdorff spaces, then $C(K)\widehat{\otimes}_\pi C(L)$ is $c_0$-saturated if and only if it is subprojective if and only if $K$ and $L$ are both scattered. We remove the hypothesis of metrizability from their result, and extend it from the case of the two-fold projective tensor product to the general $n$-fold projective tensor product to show that for any $n\in\mathbb{N}$ and compact, Hausdorff spaces $K_1, \ldots, K_n$, $\widehat{\otimes}_{\pi, i=1}^n C(K_i)$ is $c_0$-saturated if and only if it is subprojective if and only if each $K_i$ is scattered. 

\end{abstract}

\thanks{2010 \textit{Mathematics Subject Classification}. Primary: 46B03, 46B28.}
\thanks{\textit{Key words}: Spaces of continuous functions, subprojective spaces.}

\maketitle

\section{Introduction}

We recall that a Banach space $X$ is said to be \emph{subprojective} provided that any infinite dimensional subspace of $X$ admits a further infinite dimensional subspace which is complemented in $X$.   The notion of a subprojective space was introduced in \cite{W} to study preadjoints of strictly singular operators.   Subprojectivity was later shown to be closely related to perturbation classes (see \cite{OS} for more details).

As stated in \cite{GSP}, it is shown in \cite[Theorem $2.2$]{DF} that if a Banach space $X$ contains no isomorphic copy of $\ell_1$, then every isomorph of $c_0$ in $X$ admits a further subspace which is isomorphic to $c_0$ and complemented in $X$. Therefore every $c_0$-saturated space is subprojective.   Recall that a topological space $K$ is said to be \emph{scattered} (or \emph{dispersed}) if every non-empty subset of $K$ has an isolated point. We also recall that for a compact, Hausdorff space $K$, $K$ is scattered if and only if $C(K)$ is $c_0$-saturated, which is part of the main theorem of \cite{PS}. We also note that $C(K)$ is subprojective if and only if it is $c_0$-saturated. Indeed, one direction follows from the second sentence of the paragraph. If $K$ is not scattered, then again by the main theorem of \cite{PS}, $C(K)$ admits an isomorph of $\ell_1$, and this isomorph of $\ell_1$ can have no infinite-dimensional subspace which is complemented in $C(K)$. Indeed, if $P:C(K)\to X\subset C(K)$ is a projection onto $X$ and $X$ is a subspace of $\ell_1$, then $P$ is completely continuous, and therefore also weakly compact by a Theorem of Grothendieck \cite{Gr}.  Therefore $P^2$ is compact, and $X$ is finite dimensional. 

We recall that for Banach spaces $X_1, \ldots, X_n$, the $n$-fold projective tensor product $\widehat{\otimes}_{\pi,i=1}^n X_i$ is the completion of $\otimes_{i=1}^n X_i$ with respect to the norm \[\|u\|_\pi = \inf\Bigl\{\sum_{j=1}^m  \prod_{i=1}^n \|x_{ij}\|: m\in\nn, (x_{ij})_{i=1}^n\in \prod_{i=1}^n X_i\Bigr\}.\]     Of course, since subprojectivity passes to subspaces, if $K_1, \ldots, K_n$ are compact, Hausdorff spaces such that $\widehat{\otimes}_{\pi,i=1}^n C(K_i)$ is subprojective, then each $C(K_i)$ is subprojective and each $K_i$ is scattered.    The main result of this work is to prove the converse. The $n=1$ case of the following theorem  recovers the fact above that if $K$ is scattered, then $C(K)$ is  $c_0$-saturated, and our proof of the main theorem includes a self-contained proof of this special case. 

\begin{theorem} Fix $n\in\nn$ and  compact, Hausdorff spaces $K_1, \ldots, K_n$. The projective tensor product $\widehat{\otimes}_{\pi,i=1}^n C(K_i)$ is  $c_0$-saturated if and only if it is subprojective if and only if each space $K_i$ is scattered.  
\label{main1}
\end{theorem}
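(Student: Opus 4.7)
Only the implication ``each $K_i$ scattered $\Rightarrow X := \widehat{\otimes}_{\pi, i=1}^n C(K_i)$ is $c_0$-saturated'' requires nontrivial work; the other implications are dispatched in the introduction (subprojectivity passes to subspaces and $C(K_i)$ embeds isometrically in $X$; $c_0$-saturation implies subprojectivity). The plan is induction on $n$, with a self-contained argument for $n = 1$ and a transfinite Cantor--Bendixson induction for $n \ge 2$.

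For $n = 1$, the key inputs are: (a) every regular Borel measure on a scattered compact $K$ is purely atomic, so $C(K)^\ast \cong \ell_1(K)$; and (b) $C(K)$ is Asplund (since $\ell_1(K)$ has the Radon--Nikodym property), hence contains no isomorph of $\ell_1$. For any infinite-dimensional $Y \subseteq C(K)$, Rosenthal's $\ell_1$-theorem combined with (b) yields a normalized weakly Cauchy sequence in $Y$; its consecutive differences are normalized, weakly null, and by (a) pointwise null on $K$. A Bessaga--Pelczynski gliding hump using zero-dimensionality of scattered compacta extracts a subsequence that is essentially concentrated on pairwise disjoint clopen sets, hence equivalent to the $c_0$-basis.

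For $n \ge 2$, I would induct on $\alpha^\ast := \max_i \mathrm{CB}(K_i)$. The Cantor--Bendixson rank of a nonempty compact scattered space is always a successor $\beta_i + 1$ (at a limit the iterated derivatives would have the finite intersection property), so $K_i^{(\beta_i)}$ is a finite nonempty set. For each $i$ with $\mathrm{CB}(K_i) = \alpha^\ast$, zero-dimensionality of $K_i$ lets us choose a clopen partition $K_i = V_i^{(1)} \sqcup \cdots \sqcup V_i^{(k_i)}$ with each piece containing exactly one point of $K_i^{(\beta_i)}$; this gives the isometric decomposition $C(K_i) \cong \ell_\infty^{k_i}(C(V_i^{(j)}))_{j=1}^{k_i}$. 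Splitting off the unique top point $p$ of each $V_i^{(j)}$ via $C(V_i^{(j)}) \cong C_0(V_i^{(j)} \setminus \{p\}) \oplus \mathbb{K}$ as Banach spaces, and then distributing $\widehat{\otimes}_\pi$ over these finite direct sums, represents $X$ (up to isomorphism) as a finite direct sum whose summands are tensor products of $C_0$-spaces on locally compact scattered sets of rank strictly less than $\alpha^\ast$, together with possibly some scalar factors. Applying the inductive hypothesis (which I would state and prove in the stronger form ``$\widehat{\otimes}_\pi C_0(W_i)$ is $c_0$-saturated whenever each $W_i$ is locally compact scattered,'' so that the induction on rank closes), together with the fact that $c_0$-saturation passes to finite direct sums, gives the conclusion.

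The principal obstacle is managing the transfinite induction in the presence of non-metrizability. At a limit $\alpha^\ast$ the top Cantor--Bendixson layer of some $K_i$ may be infinite and the clopen-partition step above fails. A natural remedy is a separable reduction performed before the induction: for any infinite-dimensional $Y \subseteq X$, pass to the natural image of $\widehat{\otimes}_{\pi, i=1}^n C(L_i) \to X$, where $L_i$ is the Gelfand spectrum of a separable sub-C*-algebra of $C(K_i)$ containing the ``coordinate data'' of a dense countable subset of $Y$. Then $L_i$ is compact metrizable (hence of countable CB-rank) and scattered (as a continuous image of scattered $K_i$). Making this reduction rigorous is the delicate point, because the projective tensor product does not in general respect subspace embeddings; one must additionally produce compatible norm-one projections $C(K_i) \twoheadrightarrow C(L_i)$, which in the scattered setting can plausibly be obtained from transfinitely chosen continuous sections of the quotient maps $K_i \twoheadrightarrow L_i$, built inductively along the Cantor--Bendixson filtration on $L_i$.
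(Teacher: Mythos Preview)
Your approach is genuinely different from the paper's, and there is a real gap in it.

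The Cantor--Bendixson induction you propose does not close. Once you split off the top point and pass to the stronger $C_0$ formulation, you are in the locally compact category, and there a scattered space can have limit CB rank (e.g.\ $[0,\omega^\omega)$) or an infinite discrete ``top layer'' even at a successor rank (already $\mathbb{N}$ has rank $1$ with infinite $0$-th layer). In either case your finite clopen partition is unavailable, and the projective tensor product does not distribute over the infinite $c_0$-sum that would replace it. Compactifying back brings you to rank $\beta+1=\alpha^*$, which is circular. Your diagnosis that the obstruction lives only ``at a limit $\alpha^*$'' understates the problem. The proposed separable reduction does not rescue this: you would need $\widehat{\otimes}_\pi C(L_i)\hookrightarrow\widehat{\otimes}_\pi C(K_i)$ isomorphically, hence norm-one projections $C(K_i)\to C(L_i)$, hence (essentially) continuous sections of the quotient maps $K_i\twoheadrightarrow L_i$. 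That is a nontrivial claim which you have not proved; and even granting it, you would only have reduced to the metrizable case, where the known argument (Galego--Samuel) relies on the Mazurkiewicz--Sierpi\'nski classification $K\cong[0,\omega^\alpha\cdot m]$ rather than on your rank-by-one decomposition, so your inductive scheme would still be incomplete.

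The paper avoids all of this. It introduces the \emph{Grasberg norm} $[f]=\max_{0\le l<k}2^l\|f|_{K^{\gamma(K)\cdot l}}\|$ and inducts not on $CB(K_i)$ but on $(\gamma(K_1),\ldots,\gamma(K_n))\in\Gamma^n$ lexicographically, where $\gamma(K)\in\{0\}\cup\{\omega^\xi:\xi\in\ord\}$ records only the order of magnitude of $CB(K)$. The inductive step uses a family $\mathfrak{G}$ of restriction operators $Q^m_S$ to closed $S\subset K_m$ with $\gamma(S)<\gamma(K_m)$: either some $Q^m_S$ is an isomorphism on a subspace of $Z$ (and the target is $c_0$-saturated by the lexicographic inductive hypothesis), or all such restrictions are strictly singular on $Z$, in which case a direct recursive construction controlled by the Grasberg norm (Proposition~\ref{grasberg} feeding into Proposition~\ref{st}) produces a normalized sequence in $Z$ with uniformly bounded $\|\cdot\|_1^w$, hence a $c_0$-basis. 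No separable reduction, no $C_0$-spaces, and no one-point-at-a-time peeling are needed.
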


It was shown in \cite{GS} that if $K,L$ are metrizable, $C(K)\widehat{\otimes}_\pi C(L)$ is $c_0$-saturated, and therefore subprojective, if and only if $K,L$ are countable.   Theorem \ref{main1} generalizes their result in two directions: It extends the $n=2$ case to the general case, and it extends the result from metrizable spaces $K$ to arbitrary compact, Hausdorff spaces.  Moreover, separability, countability, and metrizability play no special role in our proof. Our proof makes explicit the Grasberg norm, which has previously only appeared implicitly. Previously a number of results known for $C(K)$ spaces with $K$ countable, compact, Hausdorff (such as those in \cite{S}, \cite{HLP}, and \cite{GS}) have been extended to the general case of scattered, compact, Hausdorff spaces (see \cite{C}, \cite{C2}), and the decomposition implicit in the Grasberg norm has been additionally used in, for example, \cite{C3} and \cite{CGS}.  Many of the aforementioned proofs in the case of $K$ countably infinite used the fact that $C(K)$ is isomorphic to $C(\omega^{\omega^\xi})$ for some countable $\xi$, which provides a convenient basis for induction. However, using this fact necessarily limits the results to metrizable $K$.  The Grasberg norm provides an appropriate structure to complete a proof by induction without this limitation. Moreover, these proofs demonstrate that the assumption that $K$ is an ordinal interval is an artifact rather than fundamental part of the proof, and that the Cantor-Bendixson index and derived set structure of $K$ are central to the results.

\section{The Grasberg norm}

Recall that for a topological space $K$ and a subset $L$ of $K$, $L'$ denotes the \emph{Cantor-Bendixson derivative} of $L$ (sometimes called the \emph{derived set}), which is defined to be the set of members of $L$ which are not isolated in $L$.    Note that if $L$ is a closed subset of $K$, then so is $L'$. In particular, if $K$ is compact and $L$ is closed, then $L'$ is also compact.     We define \[K^0=K,\] \[K^{\xi+1}=(K^\xi)',\] and for a limit ordinal $\xi$, \[K^\xi=\bigcap_{\zeta<\xi}K^\zeta.\] If $K$ is compact, then $K^\xi$ is compact for each ordinal $\xi$.    Note that $K$ is scattered if and only if there exists an ordinal $\xi$ such that $K^\xi=\varnothing$. If $K$ is scattered, we let $CB(K)$ denote the minimum ordinal $\xi$ such that $K^\xi=\varnothing$. The quantity $CB(K)$ is the \emph{Cantor-Bendixson index} of $K$.  If $K$ fails to be scattered, we use the notation $CB(K)=\infty$.     For a subset $S$ of an understood topological space $K$, we let $CB(S)$ denote the Cantor-Bendixson index of $S$ endowed with its subspace topology.  We let $CB(\varnothing)=0$. Note that for a compact, Hausdorff space $K$ satisfies $CB(K)=1$ if and only if $K$ is (non-empty and) finite. 

For a scattered, compact, Hausdorff space $K$, we let $\gamma(K)=0$ if $K$ is finite, and otherwise we let $\gamma(K)=\omega^\gamma$, where $\gamma$ is the unique ordinal such that $\omega^\gamma< CB(K) <\omega^{\gamma+1}$.   Let us explain how we know such an ordinal exists. First, we note that if we had instead defined $\gamma(K)$ to be the supremum of ordinals $\gamma$ such that $\omega^\gamma\leqslant CB(K)$, then by closedness of $[0, CB(K)]$, $\omega^\gamma \leqslant CB(K)$. We will explain why $\omega^\gamma<CB(K)$.  We outline this standard argument because similar arguments will be used throughout. If $\xi$ is a limit ordinal such that $\cap_{\zeta<\xi}K^\zeta=\varnothing$, then since the sets $(K^\zeta)_{\zeta<\xi}$ are compact, this collection must fail the finite intersection property.    Since this collection is a decreasing collection, it must be the case that $K^\zeta=\varnothing$ for some $\zeta<\xi$.  From this it follows that for compact, Hausdorff $K$, $CB(K)$ cannot be a limit ordinal.   Therefore if $\omega^\gamma\leqslant CB(K)$, then either $\gamma=0$ or $\gamma>0$. In the latter case,  $\omega^\gamma$ is a limit ordinal and the inequality $\omega^\gamma \leqslant CB(K)$ must be strict.  If $\gamma=0$, then $\omega^\gamma=\omega^0=1$. Since $K$ is infinite and compact, it must admit a point which fails to be isolated, which means $K^1\neq \varnothing$ and $\omega^\gamma=\omega^0=1<CB(K)$.

\begin{lemma} Let $K$ be a scattered, compact, Hausdorff topological space.    \begin{enumerate}[(i)]\item For closed subsets $S_0, \ldots, S_{k-1}$ of $K$, $\gamma\Bigl(\bigcup_{l=0}^{k-1}S_l\Bigr)=\max_{0\leqslant l<k} \gamma(S_l)$. \item For any ordinals $\zeta, \eta$, $(K^\eta)^\zeta=K^{\eta+\zeta}$. \item If $\xi$ is an ordinal and $S$ is a closed subset of $K^{\omega^\xi n}\setminus K^{\omega^\xi (n+1)}$ for some $n<\omega$, then $\gamma(S)<\omega^\xi$. \end{enumerate}
\label{ords}
\end{lemma}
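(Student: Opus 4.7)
The plan is to handle the three assertions in order; the theme throughout is transfinite induction on the ordinal in question, with limit steps handled by cofinality arguments.

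For (i), by iterating the two-set case it suffices to treat $k=2$. The key fact is the identity $(A \cup B)' = A' \cup B'$ for arbitrary subsets of a Hausdorff space: one inclusion is immediate from the monotonicity of the derivative, and for the other, given $x \notin A' \cup B'$ one chooses neighborhoods $U_A$ and $U_B$ of $x$ with $U_A \cap A \subseteq \{x\}$ and $U_B \cap B \subseteq \{x\}$, whence $U_A \cap U_B$ witnesses $x \notin (A \cup B)'$. A transfinite induction on $\xi$ then promotes this to $(S_0 \cup S_1)^\xi = S_0^\xi \cup S_1^\xi$. The only nonroutine step is the limit case: an element $x$ of $\bigcap_{\alpha<\xi}(S_0^\alpha \cup S_1^\alpha)$ belongs to $S_0^\alpha$ or $S_1^\alpha$ for each $\alpha$, so at least one of the sets $\{\alpha<\xi : x \in S_i^\alpha\}$ is cofinal in $\xi$, and because the chain $(S_i^\alpha)_\alpha$ is decreasing this forces $x \in S_i^\alpha$ for every $\alpha<\xi$ and hence $x \in S_i^\xi$. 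From the resulting identity we read off $CB\bigl(\bigcup_l S_l\bigr) = \max_l CB(S_l)$. The $\gamma$-statement then follows by unpacking the definition: pick $l_0$ maximizing $\gamma(S_{l_0})$; if the maximum is $0$ every $S_l$ is finite, so is the union, and everything is zero; otherwise write $\gamma(S_{l_0}) = \omega^{\gamma_0}$ and chain $\omega^{\gamma_0} < CB(S_{l_0}) \leqslant \max_l CB(S_l) < \omega^{\gamma_0+1}$, where the last inequality uses that $\omega^{\gamma_0+1}$ strictly dominates every $CB(S_l)$ by maximality of $\gamma_0$.

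For (ii), I fix $\eta$ and induct on $\zeta$. The cases $\zeta = 0$ and $\zeta$ a successor follow directly from the definitions. For limit $\zeta$, the inductive hypothesis gives $(K^\eta)^\zeta = \bigcap_{\alpha<\zeta} K^{\eta+\alpha}$, while by definition $K^{\eta+\zeta} = \bigcap_{\beta<\eta+\zeta} K^\beta$. Because the chain $(K^\beta)_\beta$ is decreasing, the two intersections agree as soon as $\{\eta+\alpha : \alpha<\zeta\}$ is cofinal in $[0, \eta+\zeta)$, which is exactly the content of left subtraction for ordinals: any $\beta$ with $\eta \leqslant \beta < \eta+\zeta$ is uniquely $\eta+\alpha$ with $\alpha<\zeta$, while any $\beta<\eta$ is dominated by $\eta=\eta+0$.

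For (iii), by (ii) we have $(K^{\omega^\xi n})^{\omega^\xi} = K^{\omega^\xi (n+1)}$. A short transfinite induction, mirroring the first step of (i), shows that whenever $T$ is a subspace of a topological space $L$, the Cantor-Bendixson iterates computed internally satisfy $T^\zeta \subseteq L^\zeta$ for every ordinal $\zeta$. Applying this with $T=S$ and $L=K^{\omega^\xi n}$ at $\zeta=\omega^\xi$ gives $S^{\omega^\xi} \subseteq S \cap K^{\omega^\xi(n+1)} = \varnothing$, so $CB(S) \leqslant \omega^\xi$. For $\xi \geqslant 1$, $\omega^\xi$ is a limit ordinal while $CB$ of a nonempty compact Hausdorff space is always a successor (as recalled in the preamble), so the inequality is strict and $\gamma(S)<\omega^\xi$; the edge cases $S=\varnothing$ and $\xi=0$ yield $\gamma(S)=0<\omega^\xi$ outright. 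The main hurdle I anticipate is the limit-ordinal step in (i): interchanging the transfinite intersection with the union must actively use that each chain $(S_l^\alpha)_\alpha$ is decreasing, and is not settled by symbol-pushing through the definitions alone.
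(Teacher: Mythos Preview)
Your proof is correct and follows essentially the same route as the paper: both rely on the identity $\bigl(\bigcup_l S_l\bigr)^\zeta = \bigcup_l S_l^\zeta$ for (i), treat (ii) as routine transfinite induction, and for (iii) reduce to $S^{\omega^\xi}=\varnothing$ via monotonicity of the derivative together with (ii). The paper simply cites the union identity and (ii) as ``standard'', while you supply arguments; for (iii) the paper rephrases things as a finite-intersection-property computation on $(S^\zeta)_{\zeta<\omega^\xi}$, but this is the same content as your appeal to the preamble fact that $CB$ of a nonempty compact Hausdorff space is always a successor.

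One small correction: the identity $(A\cup B)'=A'\cup B'$ is \emph{not} valid for arbitrary subsets of a Hausdorff space---take $A=\{0\}$ and $B=\{1/n:n\geqslant 1\}$ in $\mathbb{R}$, where $A'=B'=\varnothing$ but $(A\cup B)'=\{0\}$. It holds for closed subsets, and your argument tacitly uses this when producing a neighborhood $U_B$ with $U_B\cap B\subseteq\{x\}$ in the case $x\notin B$. Since the $S_l$ and all their Cantor--Bendixson iterates are closed, the induction goes through unchanged. You might also add one line in the $\xi=0$ edge case of (iii): from $CB(S)\leqslant 1$ you get that $S$ is discrete, and it is compactness of $S$ (as a closed subset of $K$) that then forces $S$ finite and hence $\gamma(S)=0$.
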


\begin{proof} Of course $\gamma\Bigl(\bigcup_{l=0}^{k-1}S_l\Bigr)\geqslant \max_{0\leqslant l<k} \gamma(S_l)$.  Suppose that $\xi=\max_{0\leqslant l<k}\gamma(S_l)$. If $\xi=0$, this means each set $S_l$ is finite, and so is the union, so $\gamma\Bigl(\bigcup_{l=0}^{k-1}S_l\Bigr)=0=\xi$.  If $\xi>0$, then $\xi=\omega^\gamma$ for some ordinal $\gamma$ such that $S_l^{\omega^{\gamma+1}}=\varnothing$ for each $0\leqslant l<k$.  It is a standard property of the Cantor-Bendixson derivative that $\Bigl(\bigcup_{l=0}^{k-1}S_l\Bigr)^\zeta = \bigcup_{l=0}^{k-1}S_l^\zeta$ for each ordinal $\zeta$, so \[\Bigl(\bigcup_{l=0}^{k-1}S_l\Bigr)^{\omega^{\gamma+1}}\subset \bigcup_{l=0}^{k-1}S_l^{\omega^{\gamma+1}} = \varnothing,\] in which case $\gamma\Bigl(\bigcup_{l=0}^{k-1}S_l\Bigr)\leqslant\omega^\gamma  =\xi$. This yields $(i)$. 

Item  $(ii)$ is a  standard property of the  Cantor-Bendixson derivative.

Let us show $(iii)$.    If $\xi=0$, then $\omega^\xi=1$.  In this case, $S\subset K^n\setminus K^{n+1}$. This means that $S$ is finite, since if $S$ were infinite, it would contain a point which is not isolated in $S$, and therefore not isolated in $K^n\supset S$. This point would necessarily lie in $K^{n+1}\subset K\setminus S$. Since $S$ is finite, $\gamma(S)=0<1=\omega^\xi$.

   Next suppose that $\xi>0$.  In this case, $\omega^\xi$ is a limit ordinal.    Moreover, $(S^\zeta)_{\zeta<\omega^\xi}$ is a decreasing sequence of compact subsets of $K$.   This means that either $\cap_{\zeta<\omega^\xi} S^\zeta \neq \varnothing$ or the family $(S^\zeta)_{\zeta<\omega^\xi}$ fails the finite intersection property. The former cannot hold, because $S^\zeta \subset S\cap (K^{\omega^\xi n})^\zeta= K^{\omega^\xi n+\zeta}=\cap_{\mu\leqslant \omega^\xi n + \zeta}K^\mu$ for all $\zeta<\omega^\xi$.  Therefore \[\bigcap_{\zeta<\omega^\xi} S^\zeta \subset S \cap \bigcap_{\zeta<\omega^\xi}\bigcap_{\mu \leqslant \omega^\xi n + \zeta} K^\mu = S\cap \bigcap_{\mu<\omega^\xi(n+1)} K^\mu = S\cap K^{\omega^\xi(n+1)} = \varnothing.\]  This means $(S^\zeta)_{\zeta<\omega^\xi}$ fails the finite intersection property, so there exists a finite subset $F$ of $\omega^\xi$ such that $\cap_{\zeta\in F}S^\zeta =\varnothing$.  If $F=\varnothing$, this means $S=\varnothing$, and $\Gamma(S)=0<\omega^\xi$.  If $F\neq \varnothing$, then since the sets $(S^\zeta)_{\zeta<\omega^\xi}$ are decreasing, $S^{\max F}=\cap_{\zeta\in F}S^\zeta = \varnothing$. From this it follows that $CB(S)\leqslant \max F<\omega^\xi$, so $\Gamma(S)<\omega^\xi$.

\end{proof}

If $K$ is an infinite, scattered, compact, Hausdorff space, there exists a unique positive integer $k=k(K)$ such that $K^{{\gamma(K)}(k-1)}\neq \varnothing$ and $K^{{\gamma(K)}k}=\varnothing$.  This is because, by the definition of $\gamma$, $K^{{\gamma(K)}}\neq \varnothing$, and if $K^{\gamma(K) l}\neq \varnothing$ for all $l\in\nn$, then, again using the finite intersection property as in $(iii)$ above, this would imply that $K^{\gamma(K)\omega}=\cap_{l\in\nn} K^{\omega^{\gamma(K)}l}\neq \varnothing$.    But this is a contradiction of the definition of $\gamma(K)$, from which it follows that $\{l<\omega: K^{\gamma(K)l}\} $ is non-empty, and $k(K)=1+\max \{l<\omega: K^{\gamma(K)l}\}$.      In the sequel, $k(K)$ will denote this integer. When the space $K$ is understood and no confusion can arise, we will write $\gamma$ in place of $\gamma(K)$ and $k$ in place of $k(K)$.

If $K$ is infinite, scattered, compact, Hausdorff, then  \[K=K^{\gamma \cdot 0} \supsetneq K^{\gamma\cdot 1}\supsetneq \ldots \supsetneq K^{\gamma \cdot (k-1)} \supsetneq K^{\gamma\cdot k}=\varnothing.\]  Moreover, $K^{\gamma \cdot l}$ is compact for each $0\leqslant l< k$.    We define the \emph{Grasberg norm} $[\cdot]$ on $C(K)$ to be \[[f]= \max_{0\leqslant l <k} 2^l \|f|_{K^{\gamma \cdot l}}\|.\] Note that this norm is equivalent to the canonical norm on $C(K)$. 

If $K$ is finite, then we let $[\cdot]$ denote the canonical norm on $C(K)$ and refer to this as the Grasberg norm.  We isolate the following useful properties of the Grasberg norms.

\begin{proposition} Let $K$ be scattered, compact, Hausdorff. \begin{enumerate}[(i)]\item Suppose $K$ is infinite.   Let $\Phi\subset C(K)$ be norm compact and assume $c>\max_{f\in \Phi} [f]$.  Then if \[S=\bigcup_{l=0}^{k-1}\Bigl\{\varpi\in K^{\gamma\cdot l}:(\exists f\in \Phi)(2^{l+1}|f(\varpi)|\geqslant c)\Bigr\},\] it follows that  $\gamma(S)<\gamma(K)=:\gamma$. \item Suppose that the numbers $c>1$, $\ee>0$ and  the sets   $\Phi, \Upsilon\subset C(K)$ and  $S\subset K$ are such that \[S=\bigcup_{l=0}^{k-1} \Bigl\{\varpi\in K^{\gamma\cdot l}:(\exists f\in \Phi)(2^{l+1}|f(\varpi)|\geqslant c)\Bigr\},\] $[g]\leqslant c$ for each $g\in \Phi$, and for each $f\in \Upsilon$, $[f]\leqslant 1/2$ and $\|f|_S\|\leqslant \ee$.     Then \[\sup\Bigl\{[\delta g+\delta' f]: |\delta|=|\delta'|=1, (g,f)\in \Phi\times\Upsilon\Bigr\}<c+2^k \ee.\] Here we observe the convention that $\|f|_S\|=0$ if $S=\varnothing$. 

     \end{enumerate}

\label{grasberg}
\end{proposition}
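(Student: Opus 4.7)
I will prove the two parts in turn.

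For part (i), the plan is to exhibit $S$ as a finite union of closed sets $S_l$, each constrained to lie between two consecutive layers of the Cantor--Bendixson filtration, and then invoke Lemma \ref{ords}. Concretely, for $0\leqslant l < k$ set
\[
S_l := \Bigl\{\varpi \in K^{\gamma\cdot l}: (\exists f\in \Phi)(2^{l+1}|f(\varpi)|\geqslant c)\Bigr\},
\]
so that $S=\bigcup_{l=0}^{k-1}S_l$. Each $S_l$ is closed: since $\Phi$ is norm compact, joint continuity of evaluation $C(K)\times K\to \mathbb{R}$ makes $\varpi \mapsto \max_{f\in \Phi}|f(\varpi)|$ continuous (a standard sup-attained argument using compactness of the index), so $S_l$ is the intersection of the closed set $K^{\gamma\cdot l}$ with a closed superlevel set. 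Next I will show $S_l\subset K^{\gamma\cdot l}\setminus K^{\gamma\cdot (l+1)}$. If $l+1=k$ this is automatic since $K^{\gamma\cdot k}=\varnothing$. Otherwise, suppose $\varpi\in S_l\cap K^{\gamma\cdot(l+1)}$ witnessed by $f\in\Phi$; then
\[
c\leqslant 2^{l+1}|f(\varpi)|\leqslant 2^{l+1}\|f|_{K^{\gamma\cdot(l+1)}}\|\leqslant [f]<c,
\]
a contradiction. Writing $\gamma=\omega^\xi$, each $S_l$ is a closed subset of $K^{\omega^\xi l}\setminus K^{\omega^\xi(l+1)}$, so Lemma \ref{ords}(iii) yields $\gamma(S_l)<\omega^\xi=\gamma$, and Lemma \ref{ords}(i) then gives $\gamma(S)=\max_l \gamma(S_l)<\gamma$, as desired.

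For part (ii), the plan is a pointwise case split on whether $\varpi\in S$ or not. Fix $g\in \Phi$, $f\in \Upsilon$, scalars $\delta,\delta'$ of modulus $1$, an index $0\leqslant l<k$, and a point $\varpi\in K^{\gamma\cdot l}$; I will bound $2^l|(\delta g+\delta' f)(\varpi)|$ by $c+2^{k-1}\ee$. If $\varpi\notin S$, then in particular $\varpi\notin S_l$, so $2^{l+1}|g(\varpi)|<c$, i.e.\ $2^l|g(\varpi)|<c/2$; and $2^l|f(\varpi)|\leqslant [f]\leqslant 1/2$; combining, using $c>1$,
\[
2^l|(\delta g+\delta' f)(\varpi)|<\tfrac{c}{2}+\tfrac{1}{2}\leqslant c.
\]
If $\varpi\in S$, then $|f(\varpi)|\leqslant \ee$ and $2^l|g(\varpi)|\leqslant [g]\leqslant c$, so
\[
2^l|(\delta g+\delta' f)(\varpi)|\leqslant c+2^l\ee\leqslant c+2^{k-1}\ee.
\]
Either way, $2^l|(\delta g+\delta' f)(\varpi)|\leqslant c+2^{k-1}\ee$, so taking max over $l$ and $\varpi$ gives $[\delta g+\delta' f]\leqslant c+2^{k-1}\ee$, and since $\ee>0$ this is strictly less than $c+2^k\ee$, uniformly over the allowed $(\delta,\delta',g,f)$.

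The only real obstacle is the closedness of $S_l$ in (i): the definition of $S_l$ is phrased as an existential quantifier over $f\in\Phi$, and closedness is only guaranteed because $\Phi$ is norm compact (rather than just bounded); the remaining work is essentially bookkeeping with the factors $2^l$ and the inequality $c>1$, which is used precisely to absorb $1/2$ into $c/2$ in the off-$S$ case of part (ii).
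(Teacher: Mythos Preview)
Your proof is correct and follows essentially the same approach as the paper's: in (i) you decompose $S$ into the layers $S_l$, verify closedness via compactness of $\Phi$, show $S_l\cap K^{\gamma\cdot(l+1)}=\varnothing$ by contradicting $[f]<c$, and conclude with Lemma~\ref{ords}; in (ii) you do the same pointwise split on $\varpi\in S$ versus $\varpi\notin S$, obtaining the bound $c+2^{k-1}\ee<c+2^k\ee$. The only cosmetic differences are that the paper first reduces by homogeneity to $\delta=1$ and records the off-$S$ bound as $(c+1)/2$ rather than absorbing it into $c$.
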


\begin{proof} $(i)$   Assume $S$ is non-empty.   For $0\leqslant l<k$, define \[S_l=\Bigl\{\varpi\in K^{\gamma\cdot l}:(\exists f\in \Phi)(2^{l+1}|f(\varpi)|\geqslant c)\Bigr\}.\]  Note that each $S_l$ is closed, since \[S_l= K^{\gamma\cdot l} \cap \bigcup_{f\in \Phi} f^{-1}([c,\infty)),\] and $K^{\gamma\cdot l}$, $\bigcup_{f\in \Phi} f^{-1}([c,\infty))$ are closed. Closedness of $\bigcup_{f\in \Phi} f^{-1}([c,\infty))$ uses the fact that $\Phi$ is norm compact.  Since $S=\cup_{l=0}^{k-1}S_l$, it is sufficient to show that $\gamma(S_l)<\gamma$ for each $0\leqslant l<k$ by Lemma \ref{ords}$(i)$.   Since $S_l\subset K^{\gamma\cdot l}$ is closed, in order to show that $\gamma(S_l)<\gamma$, by Lemma \ref{ords}$(iii)$,  it is sufficient to show that $S_l\cap K^{\gamma\cdot (l+1)}=\varnothing$.   However, if $S_l\cap K^{\gamma\cdot (l+1)}\neq \varnothing$, then there would exist $f\in \Phi$ and $\varpi\in K^{\gamma\cdot (l+1)}$ such that $2^{l+1}|f(\varpi)|\geqslant c$.  Then \[[f] \geqslant 2^{l+1}|f(\varpi)| \geqslant c,\] contradicting the hypothesis that $\sup_{g\in \Phi}[g]<c$.

$(ii)$ By homogeneity, it is sufficient to prove that \[\sup\Bigl\{[ g+\delta f]: |\delta|=1, (g,f)\in \Phi\times\Upsilon\Bigr\}<c+2^k \ee.\]  Fix $\delta$ with $|\delta|=1$ and $(g,f)\in \Phi\times \Upsilon$.   Fix $0\leqslant l<k$ and $\varpi\in K^{\gamma\cdot l}$.    If $\varpi\in S$, then \begin{align*}2^l |g(\varpi)+\delta f(\varpi)| &  \leqslant [g]+2^l \|f|_S\| \leqslant c+2^{k-1}\ee. \end{align*}  If $\varpi\in K^{\gamma\cdot l}\setminus S$, then \[ 2^l |g(\varpi)| = 2^{l+1}|g(\varpi)|/2<c/2 \] and \begin{align*} 2^l|g(\varpi)+\delta f(\varpi)| & \leqslant 2^l|g(\varpi)|+2^l|f(\varpi)| \leqslant c/2+[f] \leqslant (c+1)/2. \end{align*}  Since $0\leqslant l<k$ and $\varpi\in K^{\gamma\cdot l}$ were arbitrary, \[[g+\delta f] \leqslant \max\{c+2^{k-1}\ee, (c+1)/2\}.\]  Since $\delta$ with $|\delta|=1$ and $(g,f)\in \Phi\times \Upsilon$ were arbitrary, \[\sup\Bigl\{[ g+\delta f]: |\delta|=1, (g,f)\in \Phi\times\Upsilon\Bigr\} \leqslant \max\{c+2^{k-1}\ee, (c+1)/2\} <c+2^k \ee.\]

\end{proof}

\section{Main results}

We recall that for a Banach space $X$ and a sequence $(x_j)_{j=1}^\infty\in \ell_\infty(X)$, \[\|(x_j)_{j=1}^\infty\|_1^w = \sup \Bigl\{\sum_{j=1}^\infty |\langle x^*, x_j\rangle| : x^*\in B_{X^*}\Bigr\} = \sup\Bigl\{\Bigl\|\sum_{j=1}^m \ee_j x_j\Bigr\|: m\in\nn, |\ee_j|=1\Bigr\}.\]  The sequence $(x_j)_{j=1}^\infty$ is said to be \emph{weakly unconditionally Cauchy} (or \emph{WUC}) if $\|(x_j)_{j=1}^\infty\|_1^w<\infty$.   Of course, a seminormalized basic sequence $(x_j)_{j=1}^\infty$ in $X$ is equivalent to the canonical $c_0$ basis if and only if $\|(x_j)_{j=1}^\infty\|_1^w<\infty$.

The following is an easy and standard fact about the tensorization of WUC sequences. 

\begin{proposition} For $n\in\nn$ and Banach spaces $X_1, \ldots, X_n$, suppose that for $1\leqslant i\leqslant n$, $(x_{ij})_{j=1}^\infty \in \ell_\infty(X_i)$ is given.  Then  \[\|(\otimes_{i=1}^n x_{ij})_{j=1}^\infty\|_1^w \leqslant \prod_{i=1}^n \|(x_{ij})_{j=1}^\infty\|_1^w.\] Here, $\otimes_{i=1}^n x_{ij}$ is considered as a member of $\widehat{\otimes}_{\pi,i=1}^n X_i$.   

\label{ck}
\end{proposition}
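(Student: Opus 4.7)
The plan is to argue by induction on $n$, with the base case $n=2$ handled by a Rademacher-style polarization identity and the inductive step using associativity of the projective tensor product. By the second characterization of the WUC norm as a supremum over sign choices, the claim reduces to showing that for every $m \in \nn$ and unit scalars $\ee_1, \ldots, \ee_m$,
\[
\Bigl\|\sum_{j=1}^m \ee_j \otimes_{i=1}^n x_{ij}\Bigr\|_\pi \leqslant \prod_{i=1}^n \|(x_{ij})_{j=1}^\infty\|_1^w.
\]
Absorbing the scalars $\ee_j$ into $x_{1j}$, which preserves the WUC norm of the first sequence, it suffices to treat the case $\ee_j = 1$.

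For the base case $n=2$, set $M_i = \|(x_{ij})_{j=1}^\infty\|_1^w$. The starting observation is the Rademacher identity
\[
\sum_{j=1}^m x_{1j} \otimes x_{2j} = \frac{1}{2^m} \sum_{\sigma \in \{-1,+1\}^m} \Bigl(\sum_{j=1}^m \sigma_j x_{1j}\Bigr) \otimes \Bigl(\sum_{k=1}^m \sigma_k x_{2k}\Bigr),
\]
which follows from the orthogonality relation $2^{-m} \sum_\sigma \sigma_j \sigma_k = \delta_{jk}$. Applying the triangle inequality for $\|\cdot\|_\pi$ together with the identity $\|u \otimes v\|_\pi = \|u\|\,\|v\|$ yields
\[
\Bigl\|\sum_{j=1}^m x_{1j} \otimes x_{2j}\Bigr\|_\pi \leqslant \frac{1}{2^m} \sum_\sigma \Bigl\|\sum_j \sigma_j x_{1j}\Bigr\| \cdot \Bigl\|\sum_k \sigma_k x_{2k}\Bigr\| \leqslant M_1 M_2,
\]
since each factor in the summand is controlled by the corresponding WUC norm for every choice of $\sigma$.

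For the inductive step ($n \geq 3$), I would invoke the associativity of the projective tensor product: the map sending $\otimes_{i=1}^n x_{ij}$ to $x_{1j} \otimes (\otimes_{i=2}^n x_{ij})$ extends to an isometric isomorphism $\widehat{\otimes}_{\pi, i=1}^n X_i \cong X_1 \widehat{\otimes}_\pi \bigl(\widehat{\otimes}_{\pi, i=2}^n X_i\bigr)$. The inductive hypothesis applied to the second factor yields $\|(\otimes_{i=2}^n x_{ij})_{j=1}^\infty\|_1^w \leqslant \prod_{i=2}^n M_i$ in $\widehat{\otimes}_{\pi, i=2}^n X_i$, and applying the already-established $n=2$ case to the sequences $(x_{1j})_j$ and $(\otimes_{i=2}^n x_{ij})_j$ closes the induction at the desired bound $\prod_{i=1}^n M_i$. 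No step is particularly subtle; the only observation worth isolating is the Rademacher identity in the base case, which is essentially the bilinear polarization identity expressing diagonal terms as an average over sign patterns.
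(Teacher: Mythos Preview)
Your proof is correct and follows essentially the same approach as the paper: both reduce to the $n=2$ case via associativity of the projective tensor product and handle that case with the Rademacher polarization identity $\sum_j x_j\otimes y_j = \mathbb{E}\bigl[(\sum_j \ee_j x_j)\otimes(\sum_j \ee_j y_j)\bigr]$ together with $\|u\otimes v\|_\pi = \|u\|\,\|v\|$. The only cosmetic differences are that the paper starts the induction at the tautological $n=1$ case and keeps the unit scalars in the identity rather than absorbing them into the first factor, whereas you write the Rademacher average explicitly as a sum over $\{-1,+1\}^m$.
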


\begin{proof} We prove the result by induction on $n$. The base case $n=1$ is a tautology, since $\widehat{\otimes}_{\pi,i=1}^1 X_i=X_1$ by definition.    For the general inductive step, it is sufficient to prove the case $n=2$. Fix Banach spaces $X,Y$ and $(x_j)_{j=1}^\infty\in \ell_\infty(X)$, $(y_j)_{j=1}^\infty \in\ell_\infty(Y)$, $m\in\nn$, and $(\delta_j)_{j=1}^m$ with $|\delta_j|=1$ for each $1\leqslant j\leqslant m$.    Let $(\ee_j)_{j=1}^m$ be a Rademacher sequence. Note that \[\sum_{j=1}^m \delta_j x_j\otimes y_j = \mathbb{E} \Bigl[\Bigl(\sum_{j=1}^m \ee_j \delta_j x_j\Bigr)\otimes \Bigl(\sum_{j=1}^m \ee_j y_j\Bigr)\Bigr],\] so that \begin{align*} \Bigl\|\sum_{j=1}^m \delta_j x_j\otimes y_j \Bigr\|_\pi & \leqslant \mathbb{E}\Bigl\|\Bigl(\sum_{j=1}^m \ee_j \delta_j x_j\Bigr)\otimes \Bigl(\sum_{j=1}^m \ee_j y_j\Bigr)\Bigr\|_\pi \\ & = \mathbb{E}\Bigl\|\sum_{j=1}^n \ee_j\delta_j x_j \Bigr\|\Bigl\|\sum_{j=1}^m \ee_j y_j\Bigr\| \\ & \leqslant \mathbb{E} \|(x_j)_{j=1}^\infty\|_1^w \|(y_j)_{j=1}^\infty\|_1^w =  \|(x_j)_{j=1}^\infty\|_1^w \|(y_j)_{j=1}^\infty\|_1^w . \end{align*}

\end{proof}

\begin{corollary} For $n\in\nn$,  Banach spaces $X_1, \ldots, X_n$, and a constant $C>0$, suppose that for each $1\leqslant i\leqslant n$ and  $j\in\nn$, $\Upsilon_{ij}\subset B_{X_i}$ is given such that \[\sup\Bigl\{\|(x_j)_{j=1}^\infty\|_1^w: 1\leqslant i\leqslant n, (x_j)_{j=1}^\infty \in \prod_{j=1}^\infty \Upsilon_{ij}\Bigr\} \leqslant C.\]  Suppose also that two sequences $(u_j)_{j=1}^\infty, (v_j)_{j=1}^\infty \subset \widehat{\otimes}_{\pi,i=1}^n X_i$  are given such that $(u_j)_{j=1}^\infty$ is a seminormalized basic sequence, $\sum_{j=1}^\infty\|u_j-v_j\|<\infty$, and \[ (v_j)_{j=1}^\infty \in \prod_{j=1}^\infty \text{\emph{co}}\Bigl\{\otimes_{i=1}^n x_i: (x_i)_{i=1}^n \in \prod_{i=1}^n \Upsilon_{ij}\Bigr\}.\]  Then $(u_j)_{j=1}^\infty$ is equivalent to the canonical $c_0$ basis. 
\label{dl}
\end{corollary}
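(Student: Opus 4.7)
The plan is to first establish that the sequence $(v_j)_{j=1}^\infty$ is WUC with $\|(v_j)_{j=1}^\infty\|_1^w \leqslant C^n$, then use the absolutely-summable perturbation to transfer this bound to $(u_j)_{j=1}^\infty$, and finally invoke the characterization of $c_0$-equivalence stated just before Proposition \ref{ck}.

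For the first step, write each $v_j$ as a (finite) convex combination $v_j = \sum_{k=1}^{N_j} \lambda_{jk} \, \otimes_{i=1}^n x_{ijk}$ with $x_{ijk}\in \Upsilon_{ij}$ and $\lambda_{jk}\geqslant 0$, $\sum_k \lambda_{jk}=1$. I would then use a probabilistic averaging: independently choose random indices $\kappa_j\in\{1,\ldots,N_j\}$ with $\mathbb{P}(\kappa_j=k)=\lambda_{jk}$, so that $\mathbb{E}\bigl[\otimes_{i=1}^n x_{ij\kappa_j}\bigr]=v_j$ in $\widehat{\otimes}_{\pi,i=1}^n X_i$. Fix scalars $(\delta_j)_{j=1}^m$ with $|\delta_j|=1$. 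For any deterministic realization of the $\kappa_j$'s, the sequence $(x_{ij\kappa_j})_{j=1}^\infty$ lies in $\prod_{j=1}^\infty \Upsilon_{ij}$ for every $i$, so by hypothesis $\|(x_{ij\kappa_j})_{j=1}^\infty\|_1^w \leqslant C$, and then Proposition \ref{ck} gives
\[
\Bigl\|\sum_{j=1}^m \delta_j \otimes_{i=1}^n x_{ij\kappa_j}\Bigr\|_\pi \leqslant \bigl\|(\otimes_{i=1}^n x_{ij\kappa_j})_{j=1}^\infty\bigr\|_1^w \leqslant \prod_{i=1}^n \bigl\|(x_{ij\kappa_j})_{j=1}^\infty\bigr\|_1^w \leqslant C^n.
\]
Taking expectation, pulling it inside the norm via convexity (or Jensen's inequality), and using $\sum_{j=1}^m \delta_j v_j = \mathbb{E}\bigl[\sum_{j=1}^m \delta_j \otimes_{i=1}^n x_{ij\kappa_j}\bigr]$, yields $\|\sum_{j=1}^m \delta_j v_j\|_\pi \leqslant C^n$. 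Since $m$ and $(\delta_j)$ were arbitrary, $\|(v_j)_{j=1}^\infty\|_1^w \leqslant C^n$.

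For the second step, the absolutely summable sequence $(u_j-v_j)_{j=1}^\infty$ trivially satisfies $\|(u_j-v_j)_{j=1}^\infty\|_1^w \leqslant \sum_{j=1}^\infty \|u_j-v_j\| < \infty$. By the triangle inequality for the WUC seminorm,
\[
\|(u_j)_{j=1}^\infty\|_1^w \leqslant \|(v_j)_{j=1}^\infty\|_1^w + \|(u_j-v_j)_{j=1}^\infty\|_1^w \leqslant C^n + \sum_{j=1}^\infty \|u_j-v_j\| < \infty.
\]
Finally, since $(u_j)_{j=1}^\infty$ is assumed to be a seminormalized basic sequence and we have just shown it is WUC, the equivalence recalled before Proposition \ref{ck} gives that $(u_j)_{j=1}^\infty$ is equivalent to the canonical $c_0$ basis.

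The one step that requires a small bit of care is the probabilistic averaging argument: I would justify pulling the expectation inside the norm simply by observing that an expectation with respect to a finitely supported product measure is an honest convex combination, so $\|\mathbb{E}[\cdot]\|_\pi \leqslant \mathbb{E}\|\cdot\|_\pi$ is just the triangle inequality applied to a finite sum. Everything else is an assembly of Proposition \ref{ck}, the stability of the WUC seminorm under absolutely summable perturbation, and the standard characterization of $c_0$-equivalence.
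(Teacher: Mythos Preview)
Your proof is correct and follows essentially the same route as the paper: reduce to showing $\|(u_j)_{j=1}^\infty\|_1^w<\infty$, bound this by $\|(v_j)_{j=1}^\infty\|_1^w+\sum_j\|u_j-v_j\|$, and then use Proposition~\ref{ck} to get $\|(v_j)_{j=1}^\infty\|_1^w\leqslant C^n$. Your probabilistic averaging is just a rephrasing of the paper's direct observation that $\sum_{j=1}^m\delta_j v_j$ lies in the convex hull of vectors $\sum_{j=1}^m\delta_j\otimes_{i=1}^n x_{ij}$ with $(x_{ij})_j\in\prod_j\Upsilon_{ij}$, each of which has norm at most $C^n$.
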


\begin{proof} Since $(u_j)_{j=1}^\infty$ is seminormalized and basic, it is sufficient to show that $\|(u_j)_{j=1}^\infty\|_1^w<\infty$.  Since \[\|(u_j)_{j=1}^\infty\|_1^w \leqslant \|(v_j)_{j=1}^\infty\|_1^w + \sum_{j=1}^\infty \|v_j-u_j\|\] and $\sum_{j=1}^\infty \|v_j-u_j\|<\infty$, it is sufficient to show that $\|(v_j)_{j=1}^\infty\|_1^w<\infty$.   Note that since $\|(x_j)_{j=1}^\infty \|_1^w \leqslant C$ for each $1\leqslant i\leqslant n$ and $(x_j)_{j=1}^\infty\in \prod_{j=1}^\infty \Upsilon_{ij}$, it follows from Proposition \ref{ck} that \[\|(\otimes_{i=1}^n x_{ij})_{j=1}^\infty\|_1^w \leqslant C^n \] for any sequences $(x_{ij})_{j=1}^\infty$, $1\leqslant i\leqslant n$,  such that $(x_{ij})_{j=1}^\infty\in \prod_{j=1}^\infty \Upsilon_{ij}$ for each $1\leqslant i\leqslant n$.       From this it follows that for any $m\in\nn$ and scalars $(\delta_j)_{j=1}^m$ such that $|\delta_j|\leqslant 1$ for each $1\leqslant j\leqslant m$, \[\Bigl\|\sum_{j=1}^m \delta_j \otimes_{i=1}^n x_{ij}\Bigr\|_\pi \leqslant C^n\] for any sequences $(x_{ij})_{j=1}^\infty$, $1\leqslant i\leqslant n$,  such that $(x_{ij})_{j=1}^\infty\in \prod_{j=1}^\infty \Upsilon_{ij}$ for each $1\leqslant i\leqslant n$.   Therefore \begin{align*} \sum_{j=1}^m \delta_j v_j & \in \text{co}\Bigl\{\sum_{j=1}^m \delta_j \otimes_{i=1}^n x_{ij}: (x_{ij})_{j=1}^\infty\in \prod_{j=1}^\infty \Upsilon_{ij}\Bigr\} \subset C^n B_{\widehat{\otimes}_{\pi,i=1}^n X_i}\end{align*}  for any such $m\in\nn$ and $(\delta_j)_{j=1}^m$.   Since this holds for any  $m\in\nn$ and $(\delta_j)_{j=1}^m\in B_{\ell_\infty^m}$, $\|(v_j)_{j=1}^\infty\|_1^w \leqslant C^n$.

\end{proof}

The next result was shown in \cite{CD}.    

\begin{theorem} For every $n\in\nn$ and scattered, compact, Hausdorff spaces $K_1, \ldots, K_n$, \[\Bigl(\widehat{\otimes}_{\pi,i=1}^n C(K_i)\Bigr)^* = \widehat{\otimes}_{\ee,i=1}^n \ell_1(K_i).\] 
\label{jl}
\end{theorem}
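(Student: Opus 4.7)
My plan is to prove the theorem by induction on $n$, reducing the base case to the classification of measures on scattered compact Hausdorff spaces and handling the inductive step via a general duality theorem for projective tensor products.

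For the base case $n=1$, I would show $C(K)^* = \ell_1(K)$ when $K$ is scattered. By the Riesz representation theorem, $C(K)^*$ is isometric to the space $M(K)$ of regular Borel measures on $K$. Decompose any $\mu \in M(K)$ into its atomic and continuous parts $\mu = \mu_a + \mu_c$. If $\mu_c \neq 0$, then $\supp(|\mu_c|)$ is a non-empty closed subset of $K$ whose points are all non-isolated---contradicting scatteredness of $K$. Hence every measure in $M(K)$ is purely atomic, and the assignment $\mu \mapsto (\mu(\{k\}))_{k \in K}$ provides the desired isometric identification $M(K) \cong \ell_1(K)$.

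For the inductive step, set $X = C(K_1)$ and $Y = \widehat{\otimes}_{\pi,i=2}^n C(K_i)$, so the inductive hypothesis yields $Y^* = \widehat{\otimes}_{\ee,i=2}^n \ell_1(K_i)$. I would invoke the classical duality that whenever $X^*$ enjoys both the Radon--Nikodym property (RNP) and the metric approximation property (MAP), one has an isometric identification $(X \widehat{\otimes}_\pi Y)^* = X^* \widehat{\otimes}_\ee Y^*$. In our setting $X^* = \ell_1(K_1)$ has RNP, since it is the dual of the Asplund space $c_0(K_1)$, and it has MAP, as the projections onto finite-support subspaces form a contractive approximate identity. Combining the duality theorem with associativity of the projective tensor product yields
\[
\Bigl(\widehat{\otimes}_{\pi,i=1}^n C(K_i)\Bigr)^* \;=\; X^* \widehat{\otimes}_\ee Y^* \;=\; \ell_1(K_1) \widehat{\otimes}_\ee \widehat{\otimes}_{\ee,i=2}^n \ell_1(K_i) \;=\; \widehat{\otimes}_{\ee,i=1}^n \ell_1(K_i),
\]
closing the induction.

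The main obstacle is justifying the duality theorem in its precise isometric form. The inclusion $X^* \widehat{\otimes}_\ee Y^* \hookrightarrow (X \widehat{\otimes}_\pi Y)^*$ is always contractive. For the reverse, one uses RNP of $X^*$ to produce, for each bounded operator $T:Y \to X^*$, a Pettis-integral (and ultimately nuclear) representation, and then uses MAP to approximate $T$ by finite-rank operators whose underlying tensor expressions carry the correct injective norm. Ensuring \emph{isometry} rather than mere isomorphism here---via careful tracking of norms through the nuclear representation and an appeal to the principle of local reflexivity---is the delicate point of the argument.
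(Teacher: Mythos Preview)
The proposal has a genuine gap: the ``classical duality'' you invoke is false. The identity that always holds is $(X \widehat{\otimes}_\pi Y)^* = \mathcal{L}(X, Y^*)$ isometrically; the hypotheses RNP and AP on a dual govern the dual of the \emph{injective} tensor product, not the projective one. A clean counterexample to your stated theorem is $X = Y = \ell_2$: here $X^* = \ell_2$ has both RNP and MAP, yet $(\ell_2 \widehat{\otimes}_\pi \ell_2)^* = \mathcal{L}(\ell_2,\ell_2)$ strictly contains $\ell_2 \widehat{\otimes}_\ee \ell_2 = \mathcal{K}(\ell_2,\ell_2)$. Your sketch of how the argument would go (``RNP of $X^*$ produces \ldots\ ultimately a nuclear representation'') cannot be right, since RNP of $X^*$ only lets you represent operators into $X^*$ by Bochner densities of their associated vector measures; it does not force arbitrary bounded operators to be compact or nuclear.

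Your inductive skeleton is salvageable, but the missing ingredient is precisely where scatteredness enters beyond the base case. What you actually need in the inductive step is that every bounded operator $T: C(K_1)\to Y^*$ is compact; then, using the approximation property of $C(K_1)$, one gets $\mathcal{L}(C(K_1),Y^*)=\mathcal{K}(C(K_1),Y^*)=C(K_1)^*\widehat{\otimes}_\ee Y^*$. Compactness follows from two facts: (a) since $K_1$ is scattered, $C(K_1)$ contains no copy of $\ell_1$, so by Rosenthal's $\ell_1$ theorem every bounded sequence has a weakly Cauchy subsequence; (b) the target $Y^* = \widehat{\otimes}_{\ee,i=2}^n \ell_1(K_i)$ has the Schur property (this is the substantive input, due essentially to Lust~\cite{Lu}). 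Together these force $T$ to take bounded sequences to sequences with norm-convergent subsequences. The paper itself does not prove Theorem~\ref{jl} but cites \cite{CD}; nevertheless the RNP/MAP route as written does not reach the conclusion without this Schur-property argument.
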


\begin{lemma} Fix $n\in\nn$. Let $K_1, \ldots, K_n$ be scattered, compact, Hausdorff spaces and let $S_1, \ldots, S_n$ be such that $S_i$ is a closed subset of $K_i$ for each $1\leqslant i\leqslant n$.  For $1\leqslant m\leqslant n$, if $S_m \neq \varnothing$, define \[Q_m:\widehat{\otimes}_{\pi,i=1}^n C(K_i)\to \widehat{\otimes}_{\pi,i=1}^n C(L_{i,m})\]  to be the extension of the map \[Q_m\otimes_{i=1}^n f_i = \otimes_{i=1}^n g_i,\] where \[L_{i,m} = \left\{ \begin{array}{ll} K_i & i\neq m \\ S_m & i = m\end{array}\right.\] and \[g_i = \left\{ \begin{array}{ll} f_i & i\neq m \\ f_m|_{S_m} & i = m.\end{array}\right.\]  If $S_m=\varnothing$, let $Q_m:\widehat{\otimes}_{\pi,i=1}^n C(K_i)\to \{0\}$ be the zero operator.  Suppose that $w\in B_{\widehat{\otimes}_{\pi,i=1}^n C(K_i)}$ and $\ee>0$ are such that $\|Q_m w\|<\ee/2^n$ for each $1\leqslant m\leqslant n$.    Then there exists \[v\in \text{\emph{co}}\{\otimes_{i=1}^n f_i: f_i\in B_{C(K_i)}, f_i|_{S_i}\equiv 0\} \] such that $\|w-v\|<\ee$.  Here, if $S_i=\varnothing$, we agree to the convention that $f|_{S_i}\equiv 0$ for each $f\in C(K_i)$. 
\label{nm}
\end{lemma}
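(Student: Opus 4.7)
The plan is Hahn--Banach separation in the dual, using Theorem \ref{jl} to identify $X^* := (\widehat{\otimes}_{\pi,i=1}^n C(K_i))^* = \widehat{\otimes}_{\ee,i=1}^n \ell_1(K_i)$. Let $A = \{\otimes_i f_i : f_i \in B_{C(K_i)}, f_i|_{S_i} \equiv 0\}$ and $C = \overline{\text{co}(A)}$. Since $A = -A$ and $0 \in A$, the closed convex hull of $A$ equals its closed absolutely convex hull. It is enough to show $d(w, C) < \ee$, since a final density step then yields $v \in \text{co}(A)$ with $\|w - v\| < \ee$. By Hahn--Banach, this reduces to proving $\psi(w) - h_C(\psi) < \ee$ for every $\psi$ in the unit ball of $X^*$, where $h_C(\psi) = \sup_{v \in C} \psi(v)$.

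For each $m$, introduce the two norm-one projections $P_m, P_m^c : \ell_1(K_m) \to \ell_1(K_m)$ given by multiplication by $1_{S_m}$ and $1_{K_m \setminus S_m}$, respectively. These tensorize to norm-one operators on $\widehat{\otimes}_{\ee,i=1}^n \ell_1(K_i)$. For each $B \subseteq [n]$ set $\psi^{[B]} := \bigl(\otimes_{m \in B} P_m \otimes_{m \notin B} I\bigr)\psi$ and $\psi_V := \bigl(\otimes_m P_m^c\bigr)\psi$. Expanding $\otimes_m(P_m + P_m^c) = I$ yields the inclusion--exclusion identity
\[
\psi = \psi_V + \sum_{\emptyset \neq B \subseteq [n]} (-1)^{|B|+1}\psi^{[B]}.
\]

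Two observations then close the argument. First, $h_C(\psi) = \|\psi_V\|_\ee$: for $\otimes_i f_i \in A$ the condition $f_i|_{S_i} \equiv 0$ makes $\psi(\otimes_i f_i)$ depend only on the coefficients of $\psi$ on $\prod_i(K_i \setminus S_i)$, namely on $\psi_V$, and Goldstine's theorem applied coordinatewise (using $C_0(K_i \setminus S_i)^* = \ell_1(K_i \setminus S_i)$, valid because regular Borel measures on a scattered space are atomic) identifies the resulting supremum with $\|\psi_V\|_\ee$. Second, for each $B \neq \emptyset$ and any fixed $m \in B$, the tensor $\psi^{[B]}$ has its $m$-th coordinate supported in $\ell_1(S_m)$, so $\psi^{[B]} = Q_m^*\tilde\psi$ for some $\tilde\psi$ with $\|\tilde\psi\| = \|\psi^{[B]}\|_\ee \leq \|\psi\|_\ee$, the isometric embedding $Q_m^*$ arising because $Q_m$ is a quotient map (the restriction $C(K_m) \to C(S_m)$ is a surjective contraction by Tietze, a property preserved under projective tensor products with the identity on the remaining factors). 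Hence $|\psi^{[B]}(w)| = |\tilde\psi(Q_m w)| < \ee/2^n$ whenever $\|\psi\|_\ee \leq 1$. Combining these, and noting that $\psi_V(w) - \|\psi_V\|_\ee \leq 0$ because $|\psi_V(w)| \leq \|\psi_V\|_\ee \|w\|_\pi \leq \|\psi_V\|_\ee$, we obtain for $\|\psi\|_\ee \leq 1$:
\[
\psi(w) - h_C(\psi) = \bigl(\psi_V(w) - \|\psi_V\|_\ee\bigr) + \sum_{B \neq \emptyset}(-1)^{|B|+1}\psi^{[B]}(w) < (2^n - 1)\frac{\ee}{2^n} < \ee.
\]
The main technical subtlety lies in correctly handling the interplay between the coordinate-wise projections $P_m, P_m^c$ on the $\ell_1$-factors and the quotient/subspace structure of $Q_m$ inside the injective tensor product; both are standard consequences of Theorem \ref{jl} together with the behavior of projective tensor products under quotient maps and isometric embeddings.
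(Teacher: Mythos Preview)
Your proof is correct and follows essentially the same route as the paper: Hahn--Banach separation in the dual, a coordinate-wise splitting of the separating functional via the projections onto $S_m$ and its complement, the identification $h_C(\psi)=\|\psi_V\|$, and the bound $|\psi^{[B]}(w)|<\ee/2^n$ for each nonempty $B$ via the quotient $Q_m$. The only cosmetic differences are that you organize the splitting by inclusion--exclusion (expanding $\otimes_m(I-P_m)$) rather than the paper's direct $2^n$-term expansion $\nu=\sum_{\eta\in\{0,1\}^n}T^\eta\nu$, and you work in $\ell_1(K_i)$ via Theorem \ref{jl} where the paper writes $\mathcal{M}(K_i)$; both lead to $2^n-1$ error terms each bounded by $\ee/2^n$.
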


{\color{black} 

\begin{proof}  Let \[C= \text{co}\{\otimes_{i=1}^n f_i: f_i\in B_{C(K_i)}, f_i|_{S_i}\equiv 0\} \] and let \[U=C+\{w\in \widehat{\otimes}_{\pi,i=1}^n C(K_i): \|w\|<\ee\}.\]

Suppose the result is not true.  If no such $v$ exists, then $w\notin U$.   Note that $U$ is open and convex, so by the Hahn-Banach theorem, there exist $\nu\in S_{(\widehat{\otimes}_{\pi,i=1}^n C(K_i))^*} = S_{\widehat{\otimes}_{\ee,i=1}^n \mathcal{M}(K_i)}$ and $\alpha\in \rr$ such that \[\sup_{u\in U} \text{Re\ }\langle \nu, u\rangle <\alpha \leqslant \text{Re\ }\langle \nu, w\rangle.\]  

Note that \[\ee+\sup_{u\in C}\text{Re\ }\langle \nu,u\rangle = \sup_{u\in U}\text{Re\ }\langle \nu,u\rangle \leqslant \text{Re\ }\langle \nu, w\rangle.\]

For $1\leqslant j\leqslant n$, define $T^0_j, T^1_j:\mathcal{M}(K_j)\to \mathcal{M}(K_j)$ by letting \[T^0_j\mu(L)=\mu(L\cap S_j)\] and \[T^1_j\mu(L)= \mu(L\setminus S_j).\]   Note that $\|T^0_j\|, \|T^1_j\| \leqslant 1$. For each $\eta\in\{0,1\}^n$, define \[T^\eta=\otimes_{j=1}^n T^{\eta(j)}_j:\widehat{\otimes}_{\ee,i=1}^n \mathcal{M}(K_j)\to \widehat{\otimes}_{\ee,i=1}^n \mathcal{M}(K_j).\]    Note that \[\|T^\eta\|= \prod_{j=1}^n \|T^{\eta(j)}_j\|\leqslant 1.\]  Therefore for each $\eta\in \{0,1\}^n$, $\|T^\eta\nu\|\leqslant 1$. Note also that $\nu = \sum_{\eta\in \{0,1\}^n} T^\eta \nu$.

Let $1_n\in \{0,1\}^n$ be the sequence such that $1_n(j)=1$ for each $j\in \{1, \ldots, n\}$.  Note that \[\sup_{u\in C} \text{Re\ }\langle \nu, u\rangle = \|T^{1_n}\nu\|.\]

For $\eta\in \{0,1\}^n$, we let $|\eta|=|\{j\in \{1, \ldots, n\}: \eta(j)=1\}|$.

\begin{claim} For $\eta\in \{0,1\}^n$, if $|\eta|<n$, then $|\langle T^\eta, w\rangle|<\ee/2^n$. \end{claim}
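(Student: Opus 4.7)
The plan is to factor $T^\eta$ through the adjoint of $Q_m$ for an index $m$ with $\eta(m) = 0$; such an $m$ exists because $|\eta| < n$. First, I handle the degenerate case: if $S_m = \varnothing$, then $T^0_m \mu(L) = \mu(L \cap \varnothing) = 0$ for every $\mu$ and Borel $L$, so $T^0_m = 0$, hence $T^\eta = 0$, and the claim is trivial. Assume henceforth that $S_m \neq \varnothing$.

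Introduce the restriction map $R_m: \mathcal{M}(K_m) \to \mathcal{M}(S_m)$, $\mu \mapsto \mu|_{S_m}$, which has norm at most $1$. Let $\widetilde{T}^\eta: \widehat{\otimes}_{\ee,i=1}^n \mathcal{M}(K_i) \to \widehat{\otimes}_{\ee,i=1}^n \mathcal{M}(L_{i,m})$ denote the tensor product operator whose $i$-th factor is $T^{\eta(i)}_i$ for $i \neq m$ and whose $m$-th factor is $R_m$. By the multiplicativity of norms for tensor products of bounded operators on injective tensor products (as already invoked for $T^\eta$ itself), $\|\widetilde{T}^\eta\| \leqslant 1$. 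I would then verify the identity $T^\eta = Q_m^* \widetilde{T}^\eta$ on elementary tensors $\otimes_i \mu_i$: in the slots $i \neq m$ both sides give $T^{\eta(i)}_i \mu_i$, and in the $m$-th slot, $Q_m^*$ sends $R_m \mu_m = \mu_m|_{S_m} \in \mathcal{M}(S_m)$ to its extension by zero on $K_m$, which is precisely $T^0_m \mu_m$. Linearity and continuity extend this identity to all of $\widehat{\otimes}_{\ee,i=1}^n \mathcal{M}(K_i)$.

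Applying adjointness and using Theorem \ref{jl} to identify the two duals involved (applicable since $S_m$ is closed in the scattered space $K_m$ and therefore scattered itself), one concludes
\[|\langle T^\eta \nu, w\rangle| = |\langle Q_m^* \widetilde{T}^\eta \nu, w\rangle| = |\langle \widetilde{T}^\eta \nu, Q_m w\rangle| \leqslant \|\widetilde{T}^\eta\| \, \|\nu\| \, \|Q_m w\| < \ee/2^n,\]
which is the desired estimate. The main obstacle is the bookkeeping to confirm that $Q_m^*$ realizes extension-by-zero in the $m$-th coordinate and to validate the tensorial factorization at the level of injective tensor products; once this is secured, the estimate is essentially automatic from the hypothesis $\|Q_m w\| < \ee/2^n$.
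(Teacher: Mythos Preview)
Your proposal is correct and follows essentially the same route as the paper. The paper's operator $R$ is precisely your $\widetilde{T}^\eta$ (with the paper's $P_i$ playing the role of your $R_m$), and where you write the operator identity $T^\eta = Q_m^{*}\widetilde{T}^\eta$, the paper instead verifies the equivalent scalar identity $\langle T^\eta\nu_1,u_1\rangle=\langle R\nu_1,Q_iu_1\rangle$ by expanding both sides on finite sums of elementary tensors and then passing to the closure by density; the resulting bound $|\langle T^\eta\nu,w\rangle|\leqslant\|\nu\|\,\|Q_iw\|<\ee/2^n$ is identical to yours.
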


\begin{proof}[Proof of Claim $1$] If $|\eta|<n$, then there exists $1\leqslant i\leqslant n$ such that $\eta(i)=0$.  If $S_i=\varnothing$, then $T_i^0$ is the zero operator, and so is $T^\eta$. In this case, the inequality $|\langle T^\eta, u_0\rangle|<\ee/2^n$ is trivial. Now assume that $S_i\neq\varnothing$. Define $P_i:\mathcal{M}(K_i)\to \mathcal{M}(S_i)$ by letting $P_i\mu(L)=\mu(L)$.    Note that for any $f\in C(K_i)$ and $\mu\in \mathcal{M}(K_i)$, \[\langle T^0_i \mu, f\rangle = \langle P_iT^0_i \mu, f|_{S_i}\rangle = \langle P_i\mu, f|_{S_i}\rangle.\]  Note also that $\|P_i\|=1$.   

Define \[R_j=\left\{\begin{array}{ll} T^{\eta(j)}_j & : j \neq i \\ P_i & : j = i, \end{array} \right.\] and  \[R=\otimes_{j=1}^n R_j:\widehat{\otimes}_{\ee,j=1}^n \mathcal(K_j) \to \mathcal{M}(K_1)\widehat{\otimes}_\ee \ldots \widehat{\otimes}_\ee \mathcal{M}(K_{i-1}) \widehat{\otimes}_\ee \mathcal{M}(S_i)\widehat{\otimes}_\ee \mathcal{M}(K_{i+1}) \widehat{\otimes}_\ee \ldots \widehat{\otimes}_\ee \mathcal{M}(K_n),\] and note that $\|R\|\leqslant 1$.

Fix $\nu_1=\sum_{m=1}^M a_m \otimes_{j=1}^n \mu_{m,j}\in \widehat{\otimes}_{\ee,j=1}^n \mathcal{M}(K_j)$ and $u_1=\sum_{l=1}^L b_l\otimes_{j=1}^n f_{l,j}\in \widehat{\otimes}_{\pi,j=1}^n C(K_j)$ arbitrary.   Then \begin{align*} |\langle T^\eta \nu_1, u_1\rangle| & = \Bigl|\sum_{m=1}^M\sum_{l=1}^L a_mb_l \prod_{j=1}^n \langle T^{\eta(j)}_j\mu_{m,j}, f_{l,j}\rangle \Bigr| \\ & = \Bigl|\sum_{m=1}^M\sum_{l=1}^L a_mb_l \langle T^0_k \mu_{m,i}, f_{l,i}\rangle  \prod_{i\neq j=1}^n \langle T^{\eta(j)}_j\mu_{m,j}, f_{l,j}\rangle \Bigr| \\ & = \Bigl|\sum_{m=1}^M\sum_{l=1}^L a_mb_l \langle  P_i\mu_{m,i}, f_{l,i}|_{S_i}\rangle  \prod_{i\neq j=1}^n \langle T^{\eta(j)}_j\mu_{m,j}, f_{l,j}\rangle \Bigr| \\ & =|\langle R\nu_1, Q_i u_1\rangle| \leqslant \|R\nu_1\|\|Q_iu_1\| \leqslant \|\nu_1\|\|Q_iu_1\|.\end{align*}

Since linear combinations of elementary tensors are dense in both $\widehat{\otimes}_{\pi,j=1}^n C(K_j)$ and $\widehat{\otimes}_{\ee,j=1}^n \mathcal{M}(K_j)$, it follows from the previous computation that $|\langle T^\eta \nu,w\rangle|\leqslant \|\nu\|\|Q_iw\|=\|Q_iw\|<\ee/2^n$. This gives the claim.

\end{proof}

By the claim, it follows that \begin{align*} \ee+\|T^{1_n}\nu\| & = \ee+\sup_{u\in C}\text{Re\ }\langle \nu, u\rangle \leqslant  \text{Re\ }\langle \nu, w\rangle  \leqslant |\langle \nu, w\rangle| \leqslant |\langle T^{1_n}\nu, w\rangle|+\sum_{\underset{|\eta|<n}{\eta\in \{0,1\}^n}} |\langle T^\eta \nu, w\rangle| \\ & \leqslant \|T^{1_n}\nu\|+\ee \cdot \frac{2^n-1}{2^n}.\end{align*}  This is a contradiction, and this contradiction finishes the proof.

\end{proof}

}

The following is an easy sufficient condition for being $c_0$-saturated.

\begin{proposition} Let $X$ be a Banach space and let $\mathfrak{G}$ be a collection of operators from $X$ into (possibly different) Banach spaces.  Assume that \begin{enumerate}[(a)]\item for each $Q:X\to Y_Q\in \mathfrak{G}$, $Y_Q$ is $c_0$-saturated, and \item for any infinite dimensional subspace $Z$ of $X$, if $Q|_Z:Z\to Y_Q$ is strictly singular for each $Q\in \mathfrak{G}$, then $Z$ contains an isomorph of $c_0$. \end{enumerate} Then $X$ is $c_0$-saturated.   
\label{wr}
\end{proposition}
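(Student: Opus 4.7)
The plan is to fix an arbitrary infinite dimensional subspace $Z \subset X$ and produce an isomorph of $c_0$ inside $Z$, by splitting on a dichotomy determined by the collection $\mathfrak{G}$.

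First I would consider the case in which $Q|_Z: Z \to Y_Q$ is strictly singular for every $Q \in \mathfrak{G}$. In this case the hypothesis (b) applies directly to $Z$ and hands us an isomorph of $c_0$ inside $Z$, and there is nothing more to do.

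The remaining case is that there exists some $Q \in \mathfrak{G}$ for which $Q|_Z$ fails to be strictly singular. By the very definition of strict singularity, this means there is an infinite dimensional subspace $W \subset Z$ on which $Q$ is an isomorphism onto its image, so that $Q(W) \subset Y_Q$ is an infinite dimensional closed subspace isomorphic to $W$. I would then invoke hypothesis (a): since $Y_Q$ is $c_0$-saturated, the infinite dimensional subspace $Q(W) \subset Y_Q$ contains a further infinite dimensional subspace $V$ isomorphic to $c_0$. Pulling $V$ back through the isomorphism $Q|_W : W \to Q(W)$ yields a subspace $(Q|_W)^{-1}(V) \subset W \subset Z$ which is isomorphic to $V$, hence to $c_0$.

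In either case $Z$ contains an isomorph of $c_0$, which is exactly the definition of $X$ being $c_0$-saturated. I do not anticipate any real obstacle here; the argument is essentially a direct application of the definitions of strict singularity and $c_0$-saturation, combined with the dichotomy provided by (a) and (b). The only care needed is in invoking the correct formulation of strict singularity, namely that a non-strictly-singular operator admits an infinite dimensional subspace on which it is an isomorphism onto its image, so that the pullback construction goes through unambiguously.
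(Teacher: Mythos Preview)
Your proof is correct and follows essentially the same dichotomy argument as the paper: either every $Q|_Z$ is strictly singular and (b) applies, or some $Q|_Z$ is an isomorphism on an infinite dimensional $W\subset Z$, whence $c_0$-saturation of $Y_Q$ yields an isomorph of $c_0$ inside $W$. The paper's version is terser but the logic is identical.
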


\begin{proof} Let $Z$ be an infinite dimensional subspace of $X$. Then either there exist $Q:X\to Y_Q\in \mathfrak{G}$ and an infinite dimensional subspace $W$ of $Z$ such that $Q|_W$ is an isomorphism, in which case $W$ is isomorphic to a subspace of the $c_0$-saturated space $Y_Q$ and therefore has a further subspace isomorphic to $c_0$, or $Q|_Z:Z\to Y_Q$ is strictly singular for each $Q\in \mathfrak{G}$, in which case $Z$ contains an isomorph of $c_0$ by $(b)$.   

\end{proof}

We next show how the preceding proposition will be combined with the Grasberg norm. 

\begin{proposition} For $n\in\nn$, let $K_1, \ldots, K_n$ be infinite, scattered, compact, Hausdorff spaces.  For $1\leqslant m\leqslant n$ and a closed, non-empty subset $S$ of $K_m$, let $Q^m_S:\widehat{\otimes}_{\pi,i=1}^n C(K_i)\to Y_{Q^m_S}$ be the extension of  \[Q^m_S\otimes_{i=1}^n f_i=\otimes_{i=1}^n g_i,\] where \[g_i = \left\{\begin{array}{ll} f_i & i \neq m \\ f_m|_S & i = m\end{array}\right.\]  and  \[Y_{Q^m_S} = C(K_1)\widehat{\otimes}_\pi \ldots \widehat{\otimes}_\pi C(K_{m-1})\widehat{\otimes}_\pi C(S) \widehat{\otimes}_\pi C(K_{m+1}) \widehat{\otimes}_\pi \ldots \widehat{\otimes}_\pi C(K_n).\]  For $1\leqslant m\leqslant n$, we let $Y_{Q^m_\varnothing}$ be the zero vector space and let $Q^m_\varnothing:\widehat{\otimes}_{\pi,i=1}^n C(K_i)\to Y_{Q^m_\varnothing}$ be the zero operator.    Let \[\mathfrak{G}_m=\{Q^m_S:\widehat{\otimes}_{\pi,i=1}^n C(K_i)\to Y_{Q^m_s}: S\subset K_m, S\text{\ closed}, \gamma(S)<\gamma(K_m).\}\] and let $\mathfrak{G}=\cup_{m=1}^n \mathfrak{G}_m$.    If $Z$ is any infinite dimensional subspace $Z$ of $\widehat{\otimes}_{\pi,i=1}^n C(K_i)$ such that $Q|_Z:Z\to Y_Q$ is strictly singular for all $Q\in \mathfrak{G}$, then $Z$ contains an isomorph of $c_0$.   
\label{st}
\end{proposition}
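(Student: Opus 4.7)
The plan is to build, inside $Z$, a normalized basic sequence $(u_m)_{m=1}^\infty$ together with approximating convex combinations $v_m$ of elementary tensors in $\widehat{\otimes}_{\pi,i=1}^n C(K_i)$ such that $\sum_m \|u_m - v_m\| < \infty$ and, for each coordinate $i$, the $i$-th components of the $v_m$'s form sets to which Corollary \ref{dl} applies; the corollary will then supply the isomorph of $c_0$ inside $Z$.

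Let $k_i = k(K_i)$, $\gamma_i = \gamma(K_i)$, and note that every $x \in B_{C(K_i)}$ has Grasberg norm $[x] \leqslant 2^{k_i - 1}$, so $[x/2^{k_i}] \leqslant 1/2$. Fix a summable sequence of positive reals $(\delta_m)$ and an increasing sequence $1 < c_0 < c_1 < \cdots$ with $C := \sup_m c_m < \infty$. Suppose inductively that $u_1, \ldots, u_{m-1} \in Z$ and convex combinations $v_l = \sum_p \lambda_{lp} \otimes_{i=1}^n x_{ilp}$ with $x_{ilp} \in B_{C(K_i)}$ have been chosen satisfying $\|u_l - v_l\| < \delta_l$ and the inductive invariant that, for every $i$ and every choice of signs $(\ee_l)$ and indices $(p_l)$,
\[
\Bigl[\sum_{l=1}^{m-1} \ee_l \bigl(x_{ilp_l}/2^{k_i}\bigr)\Bigr] \leqslant c_{m-1}.
\]
For each $i$, let $\Phi_i^{(m-1)}$ be the finite, hence norm-compact, collection of all these partial sums. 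By Proposition \ref{grasberg}$(i)$ applied with $c = c_m$ (which strictly exceeds $c_{m-1} \geqslant \max_{g \in \Phi_i^{(m-1)}}[g]$), there is a closed $S_i^{(m-1)} \subseteq K_i$ with $\gamma(S_i^{(m-1)}) < \gamma_i$, so that $Q^i_{S_i^{(m-1)}} \in \mathfrak{G}_i$. Strict singularity of each of these $n$ operators on $Z$, together with a standard Mazur-type selection (using Hahn-Banach extensions of biorthogonal functionals for $u_1, \ldots, u_{m-1}$ and the fact that direct sums of strictly singular operators are strictly singular), produces $u_m \in Z$ with $\|u_m\| = 1$ extending $(u_l)$ to a basic sequence and with $\|Q^i_{S_i^{(m-1)}} u_m\| < \delta_m/2^n$ for every $i$. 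Lemma \ref{nm} then yields $v_m = \sum_p \lambda_{mp} \otimes_i x_{imp}$ with $x_{imp} \in B_{C(K_i)}$, $x_{imp}|_{S_i^{(m-1)}} \equiv 0$, and $\|u_m - v_m\| < \delta_m$.

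To preserve the invariant, observe that any length-$m$ partial sum equals $\tilde{g} + \ee_m(x_{imp_m}/2^{k_i})$ for some $\tilde{g} \in \Phi_i^{(m-1)}$. The rescaled new term has Grasberg norm at most $1/2$ and vanishes on the big set of $\{\tilde{g}\}$ at threshold $c_m$ (which is contained in $S_i^{(m-1)}$). Proposition \ref{grasberg}$(ii)$ then gives, for any $\ee > 0$, that the new partial sum has Grasberg norm strictly less than $c_m + 2^{k_i}\ee$, hence $\leqslant c_m$. Setting $\Upsilon_{ij} := \{x_{ijp} : p\} \subseteq B_{C(K_i)}$ and using $\|f\| \leqslant [f]$, every selection $(y_j)_j$ with $y_j \in \Upsilon_{ij}$ satisfies $\|(y_j)_{j}\|_1^w \leqslant 2^{k_i} C$, uniformly. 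Corollary \ref{dl} applied to $(u_m)$, $(v_m)$, and the $\Upsilon_{ij}$'s then shows that $(u_m)$ is equivalent to the canonical $c_0$ basis, producing the required $c_0$-isomorph inside $Z$.

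The principal obstacle is the simultaneous Mazur-type extraction at each stage: a single $u_m \in Z$ must be normalized, nearly biorthogonal to the accumulating list of Hahn-Banach functionals (to ensure basicness), and $\delta_m/2^n$-annihilated by all $n$ strictly singular operators $Q^i_{S_i^{(m-1)}}$. This combines the standard basic sequence selection lemma with the ideal property that finite direct sums of strictly singular operators are strictly singular; the remaining bookkeeping of thresholds, rescalings, and approximation parameters is routine.
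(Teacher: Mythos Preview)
Your proposal is correct and follows essentially the same line as the paper's own proof: both build a basic sequence in $Z$ recursively, use Proposition~\ref{grasberg}$(i)$ to locate the ``big'' closed sets of small $\gamma$-index, exploit strict singularity of the associated restriction operators together with Lemma~\ref{nm} to produce approximants whose tensor factors vanish on those sets, maintain a uniform Grasberg-norm bound on all signed partial sums via Proposition~\ref{grasberg}$(ii)$, and then invoke Corollary~\ref{dl}. The only differences are cosmetic bookkeeping---you rescale by $2^{k_i}$ coordinatewise rather than by the uniform $2^k$, and you pre-fix a bounded increasing threshold sequence $(c_m)$ rather than letting $c_m=1+2^k\sum_{l<m}\ee_l$ accumulate---neither of which changes the argument.
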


\begin{proof}     

Recall that for any Banach spaces $E,F$, any finite collection $A_1, \ldots, A_t:E\to F$ of strictly singular operators, any infinite dimensional subspace $G$ of $E$, and any $\ee>0$, there exists $e\in S_E$ such that $\|A_j e\|<\ee$ for each $1\leqslant j\leqslant t$.  Fix $\ee>0$ and numbers $(\ee_j)_{j=1}^\infty\subset (0,1)$ such that $\prod_{j=1}^\infty (1+\ee_j)<1+\ee$. Let $k=\max_{1\leqslant i\leqslant n}k(K_i)$.    Let $\gamma_i=\gamma(K_i)$ for each $1\leqslant i\leqslant n$.

Assume that $Z$ is an infinite dimensional subspace of $\widehat{\otimes}_{\pi,i=1}^n C(K_i)$ such that $Q|_Z:Z\to Y_Q$ is strictly singular for all $Q\in \mathfrak{G}$.    Fix $e_1\in S_Z$.    There exist finite subsets $\Lambda_{1,1}, \ldots, \Lambda_{n,1}$ of $B_{C(K_1)}, \ldots, B_{C(K_n)}$, respectively, and \[w_1\in \text{co}\Bigl\{\otimes_{i=1}^n f_i: (f_i)_{i=1}^n\in \prod_{i=1}^n\Lambda_{i,1}\Bigr\}\] such that $\|e_1-w_1\|<\ee_1$.     For $1\leqslant i\leqslant n$, let \[\Phi_{i,1}=\Upsilon_{i,1}= \{2^{-k}f: f\in \Lambda_{i,1}\}\] and note that $[f]\leqslant 1/2$ for all $f\in \Upsilon_{i,1}$.     Note also that \[v_1:= w_1/2^{nk}\in \text{co}\{\otimes_{i=1}^n f_i: (f_i)_{i=1}^n\in \prod_{i=1}^n \Upsilon_{i,1}\}.\]   Let $c_1=1$ and note that for each $1\leqslant i\leqslant n$, \[\sup_{f\in \Phi_{i,1}} [f] \leqslant 1/2<c_1=1.\]    For each $1\leqslant i\leqslant n$, let \[S_{i,1}=\bigcup_{l=0}^{k_i-1}\Bigl\{\varpi\in K^{\gamma_i \cdot l}_i: (\exists f\in \Phi_{i,1})(2^{l+1}|f(\varpi)|\geqslant c_1\Bigr\}\] and note that $\gamma(S_{i,1})<\gamma(K_i)$ by Proposition \ref{grasberg}$(i)$.      Let $u_1=e_1/2^{nk}$.  Choose a finite subset $G_1\subset B_{(\widehat{\otimes}_{\pi,i=1}^n C(K_i))^*}$ such that \[\inf_{e\in S_Z\cap \text{span}\{e_1\}} \max_{e^*\in G_1} |\langle e^*, e\rangle| \geqslant \frac{1}{1+\ee_1}.\]    This completes the base step of our recursion.

Assume that for some $m\in\nn$, we have \begin{enumerate}[(a)]\item $e_1, \ldots, e_m, u_1, \ldots, u_m\in Z$ such that $u_j=e_j/2^{nk}$ for each $1\leqslant j\leqslant m$, \item finite sets $ \Upsilon_{i,j}\subset C(K_i)$, $1\leqslant i\leqslant n$, $1\leqslant j \leqslant m$,  and compact sets $\Phi_{i,j}\subset C(K_i)$, $1\leqslant i\leqslant n,$ $1\leqslant j\leqslant m$,  such that \[\Phi_{i,j} = \Bigl\{\sum_{l=1}^m \delta_l f_l: |\delta_l|=1, (f_l)_{l=1}^m\in \prod_{l=1}^j \Upsilon_{i,l}\Bigr\},\] $\sup_{f\in \Upsilon_{i,j}} [f]\leqslant 1/2$, and $\sup_{g\in \Phi_{i,j}}[g] < 1+2^k\sum_{l=1}^{j-1} \ee_l$, \item $ w_1, \ldots, w_m, v_1, \ldots, v_m \in \widehat{\otimes}_{\pi,i=1}^n C(K_i)$ such that $v_j=w_j/2^{nk}$ and \[v_j \in \text{co}\Bigl\{\otimes_{i=1}^n f_i: (f_i)_{i=1}^n\in \prod_{i=1}^n \Upsilon_{i,j}\Bigr\}\] for each $1\leqslant j\leqslant m$, \item closed sets $S_{i,j}\subset K_i$, $1\leqslant i\leqslant n$, $1\leqslant j\leqslant m$,   such that \[S_{i,j}=\bigcup_{l=0}^{k_i-1}\Bigl\{\varpi\in K_i^{\gamma_i\cdot l}: (\exists f\in \Phi_{i,j})(2^{l+1}|f(\varpi)|\geqslant c_j)\Bigr\},\] where $c_j=1+2^k\sum_{l=1}^{j-1}\ee_l$,  \item finite  sets $G_j \subset B_{(\widehat{\otimes}_{\pi,i=1}^n C(K_i))^*}$, $1\leqslant j\leqslant m$,  such that \[\inf_{e\in S_Z\cap  \text{span}\{e_1, \ldots, e_j\}} \max_{e^*\in G_j} |\langle e^*, e\rangle| \geqslant \frac{1}{1+\ee_j}\] for each $1\leqslant j \leqslant m$.      \end{enumerate}

Note that by $(b)$ and $(d)$, combined with Proposition \ref{grasberg}$(i)$, $\gamma(S_{i,m})<\gamma_i$ for each $1\leqslant i\leqslant n$.    By hypothesis, for each $1\leqslant i\leqslant n$, $Q^i_{S_{i,m}}|_Z$ is strictly singular (we recall that  $Q^i_{S_{i,m}}$ is taken to be the zero operator into the zero vector space if $S_{i,m}=\varnothing$).   Note that \[Z\cap \bigcap_{e^*\in G_m}\ker(e^*)\] is an infinite dimensional subspace of $Z$. By the remark at the beginning of the proof, there exists $e_{m+1}\in S_Z\cap \bigcap_{e^*\in G_m}\ker(e^*)$ such that $\|Q^i_{S_{i,m}}e_{m+1}\|< \ee_m/2^n$ for each $1\leqslant i\leqslant n$.   By Lemma \ref{nm}, there exists \[w_{m+1}\in \text{co}\Bigl\{\otimes_{i=1}^n f_i: f_i\in B_{C(K_i)}, f_i|_{S_{i,m}}\equiv 0\Bigr\}\] such that $\|e_{m+1}-w_{m+1}\|\leqslant \ee_m$.   Define $u_{m+1}=e_{m+1}/2^{nk}$, $v_{m+1}=w_{m+1}/2^{nk}$.      For each $1\leqslant i\leqslant n$, let \[\Lambda_{i, m+1} \subset \{f\in B_{C(K_i)}:f|_{S_{i,m}}\equiv 0\}\] be a finite set such that \[w_{m+1}\in \text{co}\Bigl\{\otimes_{i=1}^n f_i: (f_i)_{i=1}^n \in \prod_{i=1}^n \Lambda_{i ,m+1}\Bigr\}.\]   Let \[\Upsilon_{i ,m+1}= \{f/2^k: f\in \Lambda_{i, m+1}\}\] and note that $[f]\leqslant 1/2$ for all $f\in \Upsilon_{i, m+1}$ and \[v_{m+1}\in \text{co}\Bigl\{\otimes_{i=1}^n f_i: (f_i)_{i=1}^n\in \prod_{i=1}^n \Upsilon_{i, m+1}\Bigr\}.\]    For $1\leqslant i\leqslant n$, define \[\Phi_{i,m+1}=\Bigl\{\sum_{j=1}^{m+1} \delta_j f_j: |\delta_j|=1, (f_j)_{j=1}^{m+1}\in \prod_{j=1}^{m+1}\Upsilon_{i,m+1}\Bigr\}.\] Since $f|_{S_{i,m}}\equiv 0$ for each $1\leqslant i\leqslant n$ and $f\in \Upsilon_{i,m+1}$, and since $(b)$ holds, it follows from Proposition \ref{grasberg}$(ii)$ that $\sup_{f\in \Phi_{i,m+1}}[f]<c_{m+1}:=1+2^k\sum_{j=1}^m \ee_j$.     Define \[S_{i,m+1}=\bigcup_{l=0}^{k_i-1}\Bigl\{\varpi\in K_i^{\gamma_i\cdot l}: (\exists f\in \Phi_{i,m+1})(2^{l+1}|f(\varpi)|\geqslant c_{m+1})\Bigr\}\] and choose a finite set $G_{m+1}\subset B_{(\widehat{\otimes}_{\pi,i=1}^n C(K_i))^*}$ such that \[\inf_{e\in S_Z\cap \text{span}\{e_1, \ldots, e_{m+1}\}} \max_{e^*\in G_{m+1}}|\langle e^*, e\rangle|\geqslant \frac{1}{1+\ee_{m+1}}.\]    This completes the recursive construction.   

We claim that $(e_j)_{j=1}^\infty$ is basic with basis constant not more than $1+\ee$, and that \[\|(e_j)_{j=1}^\infty\|_1^w <\infty.\]  Since $(e_j)_{j=1}^\infty$ is normalized, this will finish the proof.  To see the first inequality, note that since $e_{m+1}\in \cap_{e^*\in G_m}\ker(e^*)$, \begin{align*} (1+\ee_m)\Bigl\|\sum_{i=1}^{m+1} a_je_j\Bigr\| & \geqslant (1+\ee_m) \max_{e^*\in G_m}\Bigl|\Bigl\langle e^*, \sum_{j=1}^m a_je_j\Bigr\rangle\Bigr|   \geqslant \Bigl\|\sum_{j=1}^m a_je_j\Bigr\|\end{align*} for any scalars $(a_j)_{j=1}^{m+1}$.  Therefore for any $(a_j)_{j=1}^\infty\in c_{00}$ and $m<l$, \begin{align*} \Bigl\|\sum_{j=1}^l a_je_j\Bigr\| & \leqslant (1+\ee_{l-1})\Bigl\|\sum_{j=1}^{l-1} a_je_j\Bigr\|\leqslant (1+\ee_{l-1})(1+\ee_{l-2})\Bigl\|\sum_{j=1}^{l-2}a_je_j\Bigr\| \\ & \leqslant \ldots \leqslant (1+\ee_{l-1})(1+\ee_{l-2})\ldots (1+\ee_m)\Bigl\|\sum_{j=1}^m a_je_j\Bigr\| \leqslant (1+\ee)\Bigl\|\sum_{j=1}^m a_je_j\Bigr\|. \end{align*} This implies that $(u_j)_{j=1}^\infty$ is basic with basis constant not more than $1+\ee$.

For the remaining inequality, we first note that by $(b)$ of the recursive inequality, it follows that for any $1\leqslant i\leqslant n$ and $m\in\nn$,  \[\sup_{f\in \Phi_{i,m}}[f] \leqslant 1+2^k\sum_{j=1}^\infty \ee_j.\]     This is equivalent to the condition that for each $1\leqslant i\leqslant n$ and $(f_j)_{j=1}^\infty\in \prod_{j=1}^\infty \Upsilon_{i,j}$, $\|(f_j)_{j=1}^\infty\|_1^w \leqslant 1+2^k\sum_{j=1}^\infty \ee_j$.    It follows from Corollary \ref{dl} that $(u_j)_{j=1}^\infty $ is equivalent to the canonical $c_0$ basis. Since $(e_j)_{j=1}^\infty$ is a constant multiple of the sequence $(u_j)_{j=1}^\infty$, $(e_j)_{j=1}^\infty$ is equivalent to the canonical $c_0$ basis. .

\end{proof}

\begin{proof}[Proof of Theorem \ref{main1}]

By convention, the $0$-fold projective tensor product of the empty sequence of Banach spaces will be the scalar field. 

Let \[\Gamma=\{0\}\cup \{\omega^\gamma: \gamma\in\textbf{Ord}\}.\]    For $n<\omega$, let $\mathcal{P}_n$ be the proposition that for any scattered, compact, Hausdorff spaces $K_1, \ldots, K_n$, $\widehat{\otimes}_{\pi,i=1}^n C(K_i)$ is $c_0$-saturated.  We prove this by induction on $n$.   The $n=0$ case follows from the convention mentioned in the first sentence of the proof. That is, for $n=0$, the projective tensor product in question is simply the scalar field, which is vacuously $c_0$-saturated. 

Assume that for $0<n<\omega$, $\mathcal{P}_{n-1}$ holds. For $(\xi_1, \ldots, \xi_n)\in \Gamma^n$, let $\mathcal{P}_n(\xi_1, \ldots, \xi_n)$ be the proposition that for any scattered, compact, Hausdorff spaces $K_1, \ldots, K_n$ such that $\gamma(K_i)\leqslant \xi_i$ for each $1\leqslant i\leqslant n$, $\widehat{\otimes}_{\pi,i=1}^n C(K_i)$ is $c_0$-saturated.  We prove $\mathcal{P}_n(\xi_1, \ldots, \xi_n)$ by induction for $(\xi_1, \ldots, \xi_n)\in \Gamma^n$, ordered lexicographically.   Fix $(\xi_1, \ldots, \xi_n)\in \Gamma^n$ and assume $\mathcal{P}_n(\zeta_1, \ldots, \zeta_n)$ holds for each $(\zeta_i)_{i=1}^n\in \Gamma^n$ with $(\zeta_i)_{i=1}^n <_\text{lex} (\xi_i)_{i=1}^n$. Let $K_1, \ldots, K_n$ be scattered, compact, Hausdorff spaces such that $\gamma(K_i)\leqslant \xi_i$ for each $1\leqslant i\leqslant n$. Consider two cases. 

Case $1$: For some $1\leqslant m\leqslant n$, $K_m$ is finite.        Then, with $\approx$ denoting isomorphism, \begin{align*} \widehat{\otimes}_{\pi,i=1}^n C(K_i) & \approx C(K_m)\widehat{\otimes}_\pi \Bigl[C(K_1)\widehat{\otimes}_\pi \ldots \widehat{\otimes}_\pi C(K_{m-1})\widehat{\otimes}_\pi C(K_{m+1}) \widehat{\otimes}_\pi \ldots \widehat{\otimes}_\pi C(K_n)\Bigr] \\ & \approx \ell_\infty^{|K_m|}\widehat{\otimes}_\pi \Bigl[C(K_1)\widehat{\otimes}_\pi \ldots \widehat{\otimes}_\pi C(K_{m-1})\widehat{\otimes}_\pi C(K_{m+1}) \widehat{\otimes}_\pi \ldots \widehat{\otimes}_\pi C(K_n)\Bigr] \\ & \approx \ell_\infty^{|K_m|}\Bigl(C(K_1)\widehat{\otimes}_\pi \ldots \widehat{\otimes}_\pi C(K_{m-1})\widehat{\otimes}_\pi C(K_{m+1}) \widehat{\otimes}_\pi \ldots \widehat{\otimes}_\pi C(K_n)\Bigr).\end{align*} Since $\mathcal{P}_{n-1}$ holds, $C(K_1)\widehat{\otimes}_\pi \ldots \widehat{\otimes}_\pi C(K_{m-1})\widehat{\otimes}_\pi C(K_{m+1}) \widehat{\otimes}_\pi \ldots \widehat{\otimes}_\pi C(K_n)$ is $c_0$-saturated.  Note that in the case $n=1$,  $C(K_1)\widehat{\otimes}_\pi \ldots \widehat{\otimes}_\pi C(K_{m-1})\widehat{\otimes}_\pi C(K_{m+1}) \widehat{\otimes}_\pi \ldots \widehat{\otimes}_\pi C(K_n)=\mathbb{K}$ by the convention established at the beginning of the proof.     Since  \[\widehat{\otimes}_{\pi,i=1}^n C(K_i)\approx \ell_\infty^{|K_m|}\Bigl(C(K_1)\widehat{\otimes}_\pi \ldots \widehat{\otimes}_\pi C(K_{m-1})\widehat{\otimes}_\pi C(K_{m+1}) \widehat{\otimes}_\pi \ldots \widehat{\otimes}_\pi C(K_n)\Bigr),\] and the latter space is a finite direct sum of $c_0$-saturated spaces, it is $c_0$-saturated.    Since being $c_0$-saturated is an isomorphic invariant, $\widehat{\otimes}_{\pi,i=1}^n C(K_i)$ is $c_0$-saturated.

Case $2$: For each $1\leqslant i\leqslant n$, $K_i$ is infinite.   In this case, Proposition \ref{st} applies.  Let $\mathfrak{G}$ be as defined in Proposition \ref{st}.   We claim that $\mathfrak{G}$ satisfies $(a)$ and $(b)$ of Proposition \ref{wr}. It is the content of Proposition \ref{st} that $\mathfrak{G}$ satisfies $(b)$ of Proposition \ref{wr}. For $Q\in \mathfrak{G}$, $Q=Q^m_S$ for some $1\leqslant m\leqslant S$ and $S\subset K_m$ closed with $\gamma(S)<\gamma(K_m)$.  In this case, either $Y_Q=\{0\}$ if $S=\varnothing$ or $Y_Q=\widehat{\otimes}_{\pi,i=1}^n C(L_i)$, where \[L_i= \left\{\begin{array}{ll} K_i & : i\neq m \\ S & : i = m.\end{array}\right.\]  If $S=\varnothing$, then $Y_Q$ is vacuously $c_0$-saturated. If $S\neq \varnothing$, then $(\gamma(L_i))_{i=1}^n <_\text{lex} (\xi_i)_{i=1}^n$.  By the inner inductive hypothesis, $\mathcal{P}_n(\gamma(L_1), \ldots, \gamma(L_n))$ holds, so $Y_Q$ is $c_0$-saturated. Since $Q\in \mathfrak{G}$ was arbitrary, $\mathfrak{G}$ satisfies $(a)$ and $(b)$ of Proposition \ref{wr}.   By the conclusion of Proposition \ref{wr}, $\widehat{\otimes}_{\pi,i=1}^n C(K_i)$ is $c_0$-saturated.

\end{proof}


\begin{thebibliography}{Gul0}

\normalsize
\baselineskip=17pt

%%%%%%%%%%%%%%%

\bibitem{C} R.M. Causey, \emph{The Szlenk index of convex hulls and injective tensor products}, J. Funct. Anal., 272 (2) (2017), 3375-3409. 

\bibitem{C2} R.M. Causey, \emph{Szlenk and weak$^*$-dentability indices of $C(K)$}, J. Math. Anal. Appl., 447(2) (2016),  834-845. 

\bibitem{C3} R.M. Causey, \emph{Power type $\xi$-Asymptotically uniformly smooth and $\xi$-asymptotically uniformly flat norms}, J. Math. Anal. Appl., 462(2), (2018), 1482-1518. 

\bibitem{CD} R.M. Causey, S.J. Dilworth, \emph{Higher projective tensor products of $c_0$}, preprint. 

\bibitem{CGS} R.M. Causey, E. Galego, C. Samuel, \emph{The Szlenk index of $C(K)\widehat{\otimes}_\pi C(L)$}, preprint. 

\bibitem{DF} S. D\'{a}az and A. Fern\'{a}ndez, \emph{Reflexivity in Banach lattices}. Arch. Math. 63 (1994), 549-552.
[8] J. Diestel. A survey of

\bibitem{GS} E. Galego, C. Samuel, \emph{The subprojectivity of the projective tensor product of two $C(K)$ spaces with $|K|=\aleph _{0}$}, Proc. Amer. Math. Soc.  144 (2016), 2611-2617. 

\bibitem{GSP} E. Galego, M. Gonz\'{a}lez, J. Pello, \emph{On Subprojectivity and Superprojectivity of Banach Spaces}, Results Math.,  71 (2017).

\bibitem{Gr} A. Grothendieck, \emph{Sur les applications lin\'{e}aires faiblement compactes d'espaces du type $C(K)$}, Canad.
J. Math. 5 (1953), 129-173.

\bibitem{HLP} P. H\'{a}jek, G. Lancien, A. Proch\'{a}zka, \emph{Weak$^*$ dentability index of spaces $C([0,\alpha])$}, J.
Math. Anal. Appl. 353 (2009) no. 1, 239-243.

\bibitem{Lu}  F. Lust, \emph{ Produits tensoriels injectifs d'espaces de Sidon}, Colloq. Math. 32 (1975), 285-289.

\bibitem{OS} T. Oikhberg, E. Spinu, \emph{Subprojective Banach spaces}, J. Math. Anal. Appl. 424 (2015), 613-635.

\bibitem{PS} A. Pe\l czy\'{n}ski, Z. Semadeni, \emph{Spaces of continuous functions (III) (Spaces $C(\Omega)$ for $\Omega$ without perfect subsets)}, Studia Math. 18 (1959), 211-222. 

\bibitem{R}  W. Rudin, \emph{Continuous Functions on Compact Spaces Without Perfect Subsets}, Proc. Amer. Math. Soc., 8(1) (1957), 39-42.

\bibitem{RR} R. A. Ryan, \emph{Introduction to Tensor Products of Banach Spaces}, Springer-Verlag, London, (2002).

\bibitem{S} C. Samuel, \emph{Indice de Szlenk des $C(K)$, S\'{e}eminaire de G\'{e}om\'{e}trie des espaces de Banach},
Vol. I-II, Publications Math\'{e}matiques de l'Universit\'{e}e Paris VII, Paris (1983) 81-91. 

\bibitem{W} R.J. Whitley, \emph{Strictly singular operators and their conjugates}. Trans. Amer. Math. Soc. 113 (1964) 252-261.

\end{thebibliography}
\end{document}